\theoremstyle{plain}
\newtheorem{thm}{Theorem}[section]
\newtheorem{prp}[thm]{Proposition}
\newtheorem{crl}[thm]{Corollary}
\newtheorem{lmm}[thm]{Lemma}
\newtheorem{question}[thm]{Question}
\theoremstyle{definition}
\newtheorem{rem}[thm]{Remark}
\newtheorem{dfn}[thm]{Definition}
\newtheorem{exa}[thm]{Example}
\newcommand{\C}{{\mathbb C}}
\newcommand{\R}{{\mathbb R}}
\newcommand{\Z}{{\mathbb Z}}
\def\t{\mathfrak t}
\def\CO{\mathcal{O}}
\def\<{\left\langle}
\def\>{\right\rangle}
\def\End{\operatorname{End}}
\def\ind{\operatorname{ind}}
\newcommand{\abs}[1]{\lvert#1\rvert}
\newcommand{\norm}[1]{\lVert#1\rVert}
\numberwithin{equation}{section}
\title{Equivariant local index}
 \author[T. Yoshida]{Takahiko Yoshida}
\subjclass[2000]{Primary 19K56; Secondary 53D50, 58J22} 
\keywords{equivariant local index, localization, quantization conjecture}
\thanks{Partly supported by Grant-in-Aid for Young Scientists (B) 22740046.}
\address{Department of Mathematics, School of Engineering, Tokyo Denki University, 2-2 Kanda-Nishiki-cho, Chiyoda-ku, Tokyo, 101-8457, Japan}
\email{takahiko@mail.dendai.ac.jp}
\begin{document}
\maketitle

\begin{abstract}
This is an expository article on the equivariant local index developed by Fujita, Furuta, and the author in ~\cite{FFY3}. 
\end{abstract}

\section{Background, motivation, and purpose}
This is based on a joint work with Hajime Fujita and Mikio Furuta~\cite{FFY, FFY2, FFY3,FFY4}. In~\cite{FFY, FFY2} we developed an index theory for Dirac-type operators on possibly noncompact Riemannian manifolds and applied the index theory to the geometric quantization of Lagrangian fibrations. 
 In \cite{FFY3} we refined the index theory in the case of torus actions. As an application we obtained a proof of the quantization conjecture, concering the commutativity of the quantization and the symplectic reduction, in the case of torus actions. Other applications of the index theory will be described in \cite{FFY4,FY5}. The purpose of this note is to explain the equivariant index theory developed in \cite{FFY3} in a simple symplectic case. 

Let us recall the background and our motivation. One of our motivation comes from the geometric quantization. Let $(M,\omega)$ be a closed symplectic manifold. Suppose that $(M,\omega)$ is prequantizable, namely, the cohomology class represented by $\omega$ is in the image of the natural map $H^2(M;\Z)\to H^2(M;\R)$. Then, there exists a Hermitian line bundle $L\to M$ with Hermitian connection $\nabla^L$ whose curvature form $F_{\nabla^L}$ is equal to $-2\pi\sqrt{-1}\omega$.  $(L,\nabla^L)$ is called a {\it prequantum line bundle}. 

It is well known that a symplectic manifold is equipped with an almost complex structure $J$ compatible in the sense that $g(u,v):=\omega(u,Jv)$ is a Riemannian metric. For example see~\cite{McDuffSalamon}. We take and fix a compatible almost complex structure $J$. We extend $J$ to $TM\otimes_{\R} \C$ complex linearly, and $\sqrt{-1}$ and $-\sqrt{-1}$-eigenspaces by $T^{1,0}M$ and $T^{0,1}M$, respectively. 
We put
\[
W:=\wedge^{0,\bullet}T^*M\otimes L=\wedge^\bullet (T^{0,1}M)^*\otimes L. 
\]
The Levi-Civita connection with respect to the Riemannian metric $g$ together with the Hermitian connection $\nabla^L$ of $L$ induces the canonical connection $\nabla\colon \Gamma (W)\to \Gamma(T^*M\otimes W)$ on $W$. Moreover, the Clifford module structure $c\colon Cl(T^*M)\to \End (W)$ is defined as
\[
c(u):=\sqrt{-2}\left(u^{0,1}\wedge \alpha -u^{0,1}\llcorner \alpha\right) 
\]
for $u\in T^*M$ and $\alpha \in W$, where $u^{0,1}$ is the $(0,1)$-factor of $u\otimes 1\in T^*M\otimes \C\cong (T^{1,0}M)^*\oplus (T^{0,1}M)^*$. Then, the {\it Spin${}^c$ Dirac operator} is defined to be the composition 
\[
D:=c\circ \nabla \colon \Gamma(W)\to \Gamma(W).
\]
It is well known that $D$ is a first order, formally self-adjoint, elliptic differential operator of degree-one, and if $(M,\omega ,J)$ is K\"ahler and $L$ is holomorphic, then $D$ is nothing but the Dolbeault operator with coefficients in $L$ up to constant, namely, $D=\sqrt{2}(\bar \partial \otimes L+\bar \partial^*\otimes L)$. 

Let $D^0$ and $D^1$ be the degree-zero and degree-one parts of $D$, namely,
\[
D^0:=D|_{\wedge^{0,even}T^*M\otimes L},\ \ D^1:=D|_{\wedge^{0,odd}T^*M\otimes L}, 
\]
respectively. Since $M$ is closed and $D$ is elliptic, $D$ is Fredholm, namely, both of the kernels of $D^0$ and $D^1$ are finite dimensional vector spaces. Then, the index of $D$ is defined by
\[
\ind D:=\dim \ker D^0-\dim \ker D^1. 
\]
$\ind D$ is called the {\it Riemann-Roch index}. Note that $\ind D$ depends only on $\omega$ and does not depend on the choice of $J$ and $\nabla^L$ since the index is homotopy invariant and the space of compatible almost complex structures of $(M,\omega)$ is contractible. By the Atiyah-Singer index theorem, $\ind D$ can be expressed as 
\[
\ind D=\int_M e^\omega Td(TM,J), 
\]
where $Td(TM,J)$ is the Todd class of the complex vector bundle $TM$ with complex structure $J$. Moreover, if $(M,\omega ,J)$ is K\"ahler and $L$ is holomorphic, then $\ind D$ is equal to the Euler-Poincar\'e characteristic
\[
\ind D=\sum_{q\ge 0}(-1)^q\dim H^q(M,\CO_L). 
\]
For the Spin${}^c$ Dirac operators see \cite{LM}. 

A {\it Lagrangian fibration} is a fiber bundle $\pi\colon (M,\omega)\to B$ from $(M,\omega)$ to a manifold $B$ whose fiber is a Lagrangian submanifold of $(M,\omega)$. Note that for a Lagrangian fibration $\pi\colon (M,\omega)\to B$, the restriction $(L,\nabla^L)|_{\pi^{-1}(b)}$ to each fiber $\pi^{-1}(b)$ is a flat line bundle since $F_{\nabla^L}=-2\pi\sqrt{-1}\omega$ and a fiber is Lagrangian. A fiber $\pi^{-1}(b)$ of a Lagrangian fibration $\pi\colon (M,\omega)\to B$ is said to be {\it Bohr-Sommerfeld} if $(L,\nabla^L)|_{\pi^{-1}(b)}$ has a non-trivial global parallel section. The Bohr-Sommerfeld condition is equivalent to that the degree zero cohomology $H^0\left(\pi^{-1}(b);(L,\nabla^L)|_{\pi^{-1}(b)}\right)$ with coefficients in the local system $(L,\nabla^L)|_{\pi^{-1}(b)}$ is non-trivial. It is known that the Bohr-Sommerfeld fibers appear discretely. Then, in \cite{Andersen} Andersen showed that for a Lagrangian fibration $\pi\colon (M,\omega)\to B$ the Riemann-Roch index is equal to the number of Bohr-Sommerfeld fibers. 

A completely integrable system can be thought of as a Lagrangian fibration with singular fibers. Similar results are known for several completely integrable systems, such as the polygon space~\cite{Kamiyama}, the Gelfand-Cetlin completely integrable system on a complex flag variety~\cite{GuS4} and the Goldman completely integrable system on the moduli space of flat $SU(2)$-bundles on a Riemann surface~\cite{JW1}. 

Suppose $(M,\omega)$ is equipped with an effective Hamiltonian action of a compact Lie group $G$ which lifts to $L$ and preserves all the data. Then, 
$\ker D^0$ and $\ker D^1$ become $G$-representations. In this case the equivariant Riemann-Roch index is defined as 
\[
\ind_G D:=\ker D^0-\ker D^1\in R(G), 
\]
where $R(G)$ is the representation ring of $G$. In the case where $G$ is a torus $(S^1)^n$, $M$ is a complex $n$-dimensional nonsingular projective toric variety, and $L$ is an ample line bundle, it is known by Danilov~\cite{Dan} that $\ind_G D$ has the following irreducible decomposition
\begin{equation*}
\ind_G D=\bigoplus_{\gamma^* \in \mu (M)\cap \t_{\Z}^*}\C_{\gamma^*} ,
\end{equation*}
where $\mu$ is the moment map associated to $M$ and $\C_{\gamma^*}$ is the irreducible representation with weight $\gamma^*$. The moment map $\mu$ can be thought of as a Lagrangian fibration with singular fibers. All singular fibers of $\mu$ are smooth tori. Hence, in this case, the notion of a Bohr-Sommerfeld fiber makes sense even for singular fibers. Moreover, elements of $\mu(M)\cap \t_{\Z}^*$ correspond one-to-one to Bohr-Sommerfeld fibers. In particular, the Danilov formula can be thought of as a refinement of Andersen's result. 
The Danilov formula was generalized to non-symplectic cases, such as, presymplectic toric manifolds~\cite{KT}, Spin${}^c$ manifolds~\cite{GK}, and torus manifolds~\cite{Masuda}. 

In the Geometric quantization the Riemann-Roch index and the number of Bohr-Sommerfeld fibers correspond to the dimensions of the quantum Hilbert spaces obtained by the Spin${}^c$ quantization and the geometric quantization using a real polarization, respectively. From the viewpoint of the geometric quantization it is fundamental to investigate the relationship between these two quantizations. 

The above results are localization phenomena of the Riemann-Roch index to the Bohr-Sommerfeld fibers. So we have a natural question: We wonder whether all of these localization phenomena might be caused by the same mechanism. If it is true, make clear the mechanism of the phenomena. 
%
%
%
%

For this question we gave a partial answer in \cite{FFY,FFY2,FFY3}. Namely, in \cite{FFY} we developed an index theory for Dirac-type operators on possibly noncompact Riemannian manifolds, which we call the {\it local index}, and improved the result obtained in \cite{FFY} and obtained a product formula for the local index in \cite{FFY2}.  In \cite{FFY3} we refined the local index to the case of torus actions and gave a proof of the quantization conjecture for the Hamiltonian torus actions. Some of the above results, such as the equality between the Riemann-Roch index and the number of Bohr-Sommerfeld fibers for a nonsingular Lagrangian fibration and the Danilov formula for a toric variety, were obtained as consequences of the excision property for the (equivariant) local index. 
See \cite{FFY,FFY2,FFY3,FY5}. 

Although the index theory developed in \cite{FFY,FFY2,FFY3} is formulated for Riemannian manifolds it seems complicated for non experts. So, in this note, for simplicity, we will explain the equivariant version of the local index for the Hamiltonian $S^1$-actions. It is one of the simplest case. 

This note is organized as follows. In Section 2 we will explain two versions of local indices for prequantizable symplectic manifolds with Hamiltonian $S^1$-actions. In particular, when the manifold has an open covering that satisfies certain conditions we can obtain localization formulas for local indices. In Section 3, we will give a way to take such an open covering and will consider the localization formulas for the open covering in details.


\section{Equivariant local index}
\subsection{$\ind_{S^1}(M,V;L)$}
In this subsection let us recall the equivariant local index in the symplectic case. Let $(M,\omega)$ be a possibly non-compact symplectic manifold and $(L,\nabla^L)\to (M,\omega)$ a prequantum line bundle on it. Suppose $M$ is equipped with an effective Hamiltonian $S^1$-action which lifts to $L$ and preserves all the data. Note that each orbit $\CO$ is isotropic in the sense that $\omega|_{\CO}\equiv 0$. In particular, the restriction of $(L,\nabla^L)$ to each orbit is a flat line bundle because of $\frac{\sqrt{-1}}{2\pi}F_{\nabla^L}=\omega$. 

In order to define the equivariant local index we introduce the following notion. 
\begin{dfn}
An orbit $\CO$ is said to be {\it $L$-acyclic} if it satisfies the condition $H^0\left(\CO ;(L,\nabla^L)|_{\CO}\right)=0$.
\end{dfn}
Note that the non $L$-acyclic condition is a generalization of the Bohr-Sommerfeld condition for Lagrangian submanifolds. 

The following lemma is one of the key points to define the equivariant local index. 
\begin{lmm}\label{key}
Let $\CO$ be an orbit of the $S^1$-action on $M$. Then, the following conditions are equivalent:
\begin{enumerate}
\item $\CO$ is $L$-acyclic. 
\item $(L,\nabla^L)|_{\CO}$ admits no non-trivial global parallel section. 
\item $H^\bullet \left(\CO;(L,\nabla^L)|_{\CO}\right)=0$. 
\item The kernel of the de Rham operator of $\CO$ with coefficients in $L$ vanishes. 
\end{enumerate}
\end{lmm}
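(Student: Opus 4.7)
The plan is to split the four-way equivalence into three smaller pieces that close a logical cycle: $(1) \Leftrightarrow (2)$, $(3) \Leftrightarrow (4)$, and $(1) \Leftrightarrow (3)$. The first two are essentially formal, and the third reduces to an elementary calculation once the orbits of the $S^1$-action are classified.

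The equivalence $(1) \Leftrightarrow (2)$ is tautological: for any flat line bundle on any manifold, the degree-zero local-system cohomology is, by definition, the space of globally defined $\nabla^L$-parallel sections. For $(3) \Leftrightarrow (4)$ I would appeal to Hodge theory with coefficients in a flat bundle on a closed Riemannian manifold. Since $S^1$ is compact, every orbit $\CO$ is compact, so after endowing $\CO$ with any invariant Riemannian metric the twisted de Rham operator $D_{\CO} := d_{\nabla^L} + d_{\nabla^L}^*$ acting on $\Omega^\bullet(\CO; L|_{\CO})$ is elliptic and formally self-adjoint, and the twisted Hodge decomposition identifies $\ker D_\CO$ with $\bigoplus_k H^k(\CO; (L,\nabla^L)|_{\CO})$. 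Hence (4) fails if and only if the total cohomology in (3) fails.

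The substantive step is $(1) \Leftrightarrow (3)$. The key observation is that an orbit of the $S^1$-action has the form $\CO \cong S^1/\Gamma$ for some closed subgroup $\Gamma \subset S^1$, so $\CO$ is either a single point (when $\Gamma = S^1$) or diffeomorphic to $S^1$ (when $\Gamma$ is finite). If $\CO$ is a point $\{p\}$, then $H^\bullet(\CO; L|_{\CO}) \cong L_p$ is concentrated in degree zero and is nonzero, so (1) and (3) both fail simultaneously. If $\CO \cong S^1$, then $(L,\nabla^L)|_{\CO}$ is a flat line bundle on a circle, classified up to flat isomorphism by its holonomy $h \in U(1)$ around the loop; a direct computation with the twisted de Rham complex (or equivalently with the monodromy representation of $\pi_1(S^1) = \Z$) gives $H^0 \cong \ker(h - 1)$ and $H^1 \cong \operatorname{coker}(h - 1)$, and both vanish precisely when $h \neq 1$. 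Consequently $H^0 = 0$ forces $H^\bullet = 0$, which is exactly $(1) \Rightarrow (3)$; the converse is trivial.

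The main obstacle, to the extent there is one, is purely organisational: one wants a uniform statement covering fixed points and free-or-almost-free orbits in one stroke, and one must invoke the twisted de Rham theorem and the Hodge decomposition for flat bundles. Once the orbit classification is in hand, no serious analytic work remains.
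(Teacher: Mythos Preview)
Your proof is correct and follows essentially the same route as the paper: the paper also dispatches $(1)\Leftrightarrow(2)$ as obvious, invokes Hodge theory for $(3)\Leftrightarrow(4)$, and handles $(1)\Leftrightarrow(3)$ by reducing to the case of a circle (citing an external reference rather than writing out the holonomy computation you give). Your treatment is simply more self-contained, and your explicit handling of the fixed-point case matches the paper's remark that a fixed point is never $L$-acyclic.
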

\begin{rem}
An orbit consisting of a fixed point is not $L$-acyclic since on such an orbit $(L,\nabla^L)$ always has a non-trivial global parallel section. 
\end{rem}
\begin{proof}[Proof of Lemma~\ref{key}]
It is clear that the first two conditions are equivalent. Since an $L$-acyclic orbit $\CO$ is a circle the first and third conditions are equivalent. See~\cite[Lemma~2.29]{FFY2}. Moreover, by the Hodge theory, the third condition is also equivalent to the fourth condition.  
\end{proof}
\begin{exa}[Non $L$-acyclic orbits in $\C P^1$]\label{non L-acyclic orbits in CP^1}
Let $k$ be a positive integer. 
%
%
Define $(M,\omega)$ and $(L,\nabla^L)$ to be the following quotient spaces by the equivalence relations
\[
\begin{split}
&(M,\omega):=
\left(S^3_k,\frac{\sqrt{-1}}{2\pi}\sum_{j=0}^1dz_j\wedge d\bar z_j\right)/_{(z_0,z_1)\sim (hz_0,hz_1)\ \ (h\in S^1)}, \\
&(L,\nabla^L):=
\left(S^3_k\times \C , d+\frac{1}{2}\sum_{j=0}^1(z_jd\bar z_j-\bar z_jdz_j)\right)/_{(z_0,z_1,v)\sim (hz_0,hz_1,h^kv)},
\end{split}
\]
where $S^3_k:=\{ z=(z_0,z_1)\in \C^2\colon \norm{z}^2=k \}$. Namely, $(M,\omega)$ is the one-dimensional complex projective space $\C P^1$ with $k\omega_{FS}$, where $\omega_{FS}$ is the Fubini-Study form which represents the generator of $H^\bullet (\C P^1;\Z)$, and $L$ is the $k$th tensor power of the hyperplane line bundle $H^{\otimes k}$.  

Take and fix an integer $m$. Let us consider the toric $S^1$-action on $M$ and its lift on $L$ which is defined by
\[
g[z_0:z_1,v]:=[z_0:gz_1,g^mv]
\]
for $g\in S^1$ and $[z_0:z_1,v]\in L$. In this example we have the following exactly $k+1$ non $L$-acyclic orbits
\[
\CO_i:=\{[z_0:z_1]\in M\colon \abs{z_1}^2=i\} \ (i=0, 1, \ldots ,k). 
\]
In fact, for an orbit $\CO$ take and fix an element $[z_0:z_1]\in \CO$. Then, $\CO$ can be written as $\CO=\{[z_0:hz_1]\colon h\in S^1 \}$. Suppose $s\in H^0(\CO;(L,\nabla^L)|_{\CO})$ is a non-trivial global parallel section. Then, it is easy to show that $s$ should be of the form
\begin{equation}\label{s}
s([z_0:hz_1])=[z_0:hz_1,h^{\abs{z_1}^2}s_0]
\end{equation}
for some complex number $s_0\in \C$. In particular, by \eqref{s}, $\abs{z_1}^2$ should be integer since $s$ is a global section on $\CO$. Conversely, suppose $\abs{z_1}^2$ is an integer. Then, \eqref{s} defines a non-trivial global parallel section on $\CO$. 
\end{exa}

In~\cite{FFY2,FFY3} we obtained the following theorem. 
\begin{thm}[\cite{FFY2,FFY3}]\label{main}
Let $(M,\omega)$ be a possibly non-compact symplectic manifold with effective Hamiltonian $S^1$-action and $(L,\nabla^L)\to (M,\omega)$ an $S^1$-equivariant prequantum line bundle on it. Let $V\subset M$ be an $S^1$-invariant open set which contains only $L$-acyclic orbits and whose complement $M\setminus V$ is compact. For these data, 
there exists an element $\ind_{S^1}(M,V;L)\in R(S^1)$ of the representation ring $R(S^1)$ of $S^1$ that satisfies the following properties:
\begin{enumerate}
\item $\ind_{S^1}(M,V;L)$ is invariant under continuous deformation of the data. 
\item If $M$ is closed, then, $\ind_{S^1}(M,V;L)$ is equal to the equivariant index of a Spin${}^c$ Dirac operator. 
\item If $V'$ is an $S^1$-invariant open subset of $V$ with complement $M\setminus V'$ compact, then we have 
\[
\ind_{S^1}(M,V;L)=\ind_{S^1}(M,V';L). 
\]
\item If $M'$ is an $S^1$-invariant open neighborhood of $M\setminus V$, then $\ind_{S^1}(M,V;L)$ has the following excision property
\[
\ind_{S^1}(M,V;L)=\ind_{S^1}(M',V\cap M';L|_{M'}). 
\]
\item \label{disjoint union}If $M$ is a disjoint union $M=M_1\coprod M_2$, 
then we have the following sum formula
\[
\ind_{S^1}(M,V;L)=\ind_{S^1}(M_1,V\cap M_1;L|_{M_1})\oplus \ind_{S^1}(M_2,V\cap M_2;L|_{M_2}). 
\]
\item We have a product formula for $\ind_{S^1}(M,V;L)$. For the precise statement see \cite[Theorem~5.8]{FFY2}. 
\end{enumerate}
We call $\ind_{S^1}(M,V;L)$ an {\it equivariant local index}. 
\end{thm}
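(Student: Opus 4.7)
The plan is to construct $\ind_{S^1}(M,V;L)$ as the $S^1$-equivariant index of a suitable Fredholm perturbation of the Spin${}^c$ Dirac operator $D$, and to then read off each of the six properties from that perturbation. Fix an $S^1$-invariant compatible metric and almost complex structure so that $D$ on $W=\wedge^{0,\bullet}T^*M\otimes L$ is defined as in the excerpt, and let $X$ denote the fundamental vector field of the $S^1$-action on $M$. By Lemma~\ref{key}(4), on every orbit $\CO\subset V$ the de Rham operator with coefficients in $(L,\nabla^L)|_\CO$ is invertible. This fiberwise invertibility on the non-compact end $V$ is the mechanism that will produce a Fredholm operator on the possibly non-compact manifold $M$.

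Concretely, I would cover $V$ by $S^1$-invariant tubular neighborhoods of orbits in which $D$ splits, modulo lower order terms, into an orbit-tangential de Rham-type operator and a transverse Dirac-type part. Choose an $S^1$-invariant cutoff $\rho$ on $M$ that vanishes on a neighborhood of $M\setminus V$ and equals $1$ outside a slightly larger compact set, and assemble the fiberwise de Rham operators, via an invariant partition of unity, into a globally defined, $S^1$-equivariant, formally self-adjoint first-order operator $\Phi$ on $W$. Set
\[
\tilde D:=D+\rho\,\Phi.
\]
Since $\rho\Phi$ vanishes on a neighborhood of the compact set $M\setminus V$, the symbol of $\tilde D$ is still Clifford multiplication and $\tilde D$ is elliptic everywhere; on each orbit in $\{\rho=1\}$ it agrees with a nonzero multiple of the invertible de Rham operator of that orbit. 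A standard separation-of-variables/Rellich argument then shows $\tilde D$ is Fredholm on $L^2(W)$, and I define
\[
\ind_{S^1}(M,V;L):=\ker\tilde D^0-\ker\tilde D^1\in R(S^1).
\]

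With $\tilde D$ in hand the six properties become essentially formal. Property~(1) is continuity of the equivariant Fredholm index under deformation: the space of admissible $(g,J,\nabla^L,\rho,\Phi)$ is connected through operators of the same form, so the index is independent of these choices. Property~(2) is immediate by taking $V=\emptyset$, permissible when $M$ is closed: then $\rho\Phi=0$, $\tilde D=D$, and we recover the equivariant Spin${}^c$ Dirac index. Property~(3) is an instance of (1), obtained by homotoping $\rho$ from a cutoff for $V$ to one for $V'\subset V$. Property~(4) (excision) follows from a further deformation that shrinks the support of $\tilde D-D$ into $M'$: fiberwise invertibility on $V$ keeps the operator Fredholm throughout the homotopy, so the index is unchanged and is realized on $M'$. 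Property~(5) is immediate from the orthogonal decomposition $\tilde D=\tilde D_1\oplus\tilde D_2$ on a disjoint union. Property~(6) is the product formula of \cite[Theorem~5.8]{FFY2}.

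The principal obstacle is establishing the Fredholm property of $\tilde D$ and, closely related to it, the excision statement~(4). Pointwise invertibility of the orbit-tangential operator on each $L$-acyclic orbit must be upgraded to a uniform positive lower bound on its spectrum over $S^1$-invariant compact subsets of $V$; this is what lets $\rho\Phi$ dominate the transverse terms outside a compact set. Combined with standard elliptic estimates in the transverse direction and the compactness of $M\setminus V$, this yields Fredholmness and the decay estimate needed to concentrate $L^2$-solutions on any prescribed invariant neighborhood $M'$ of $M\setminus V$. Once this analytic core is in place, all the remaining clauses reduce to homotopy invariance of the equivariant Fredholm index, except for the product formula~(6), which requires a separate but parallel argument carried out in \cite{FFY2}.
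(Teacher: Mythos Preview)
Your broad idea---perturb the Spin${}^c$ Dirac operator by an $S^1$-invariant operator built from the orbitwise de Rham operators on $V$, and use the $L$-acyclic hypothesis via Lemma~\ref{key}(4) to force finite-dimensionality of the kernel---is exactly the mechanism the paper sketches. But two essential ingredients of the paper's construction are missing from your proposal, and without them the Fredholm claim does not go through.

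First, the paper's perturbation is $D_t=D+t\rho D_{\mathrm{fiber}}$ with a \emph{large parameter} $t\gg 0$; this is the ``infinite dimensional analog of the Witten deformation.'' Your $\tilde D=D+\rho\Phi$ has no such parameter. The point is that pointwise invertibility of the orbitwise de Rham operator gives a uniform lower bound for $D_{\mathrm{fiber}}^2$ on compacta of $V$, but to make the perturbation dominate the cross terms and the transverse part of $D$ in a Bochner-type estimate one must scale it by $t$; only for $t$ sufficiently large does one obtain the a priori inequality that forces $L^2$-solutions to live on the compact piece. A fixed $\rho\Phi$ need not be Fredholm.

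Second, the paper does \emph{not} attempt to prove finite-dimensionality directly on an arbitrary end. It first treats the case where $M$ has a cylindrical end $V=N\times(0,\infty)$ with translationally invariant data (Proposition~\ref{Euclidian end}), where separation of variables and standard analysis on cylinders apply, and then \emph{defines} the general index by first replacing $V$ by such a cylindrical end. Your ``standard separation-of-variables/Rellich argument'' presumes exactly this cylindrical structure; on a general non-compact $V$ Rellich compactness fails and there is no separation of variables to invoke. The replacement step, together with the verification that the result is independent of how the end is cylindricalized, is where most of the work in \cite{FFY2} lies, and it is also what makes the excision property~(4) a statement rather than a tautology: one is comparing indices on genuinely different manifolds $M$ and $M'$, not merely homotoping an operator on a fixed $M$.

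Once you insert the parameter $t$ and the cylindrical-end reduction, your derivations of properties (1)--(5) are in line with the paper's; property~(6) is, as you say, a separate argument.
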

\begin{rem}
An orbifold version is available. It will be necessary in Section 3. 
\end{rem}

See~\cite{FFY2} for a proof. Let us briefly recall the construction of $\ind_{S^1}(M,V;L)$. The idea used here is the following infinite dimensional analog of the Witten deformation. Let $D\colon \Gamma(W)\to \Gamma(W)$ be the $S^1$-invariant Spin${}^c$ Dirac operator. For $t\ge 0$ consider the following perturbation of $D$
\[
D_t:=D+t\rho D_\text{fiber},
\]
where $\rho$ is an $S^1$-invariant cut-off function on $M$ with $\rho|_{M\setminus V}\equiv 0$ and $\rho\equiv 1$ outside a compact neighborhood of $M\setminus V$, and $D_\text{fiber}$ is an $S^1$-invariant de Rham operator on $V$ along orbits in the following sense, namely, 
\begin{enumerate}
\item $D_\text{fiber}\colon \Gamma\left(W|_V\right)\to \Gamma\left(W|_V\right)$ is an order-one, formally self-adjoint $S^1$-invariant differential operator of degree-one.  
\item $D_\text{fiber}$ contains only derivatives along orbits. 
\item For each orbit $\CO$ in $V$ $D_\text{fiber} |_{\CO}$ is the de Rham operator of $\CO$ with coefficients in $(L,\nabla^L )|_{\CO}$. 
\item For each orbit $\CO$ in $V$ let $u\in \Gamma (TV|_{\CO})$ be an $S^1$-invariant section perpendicular to the orbit direction. $u$ acts on $\Gamma\left(W|_{\CO}\right)$ as the Clifford multiplication $c(u)$. Then, $D_\text{fiber}$ anti-commutes with $c(u)$.
%
%
%
\end{enumerate}
It is possible to take such a $D_\text{fiber}$. Since $V$ contains only $L$-acyclic orbits the third condition for $D_\text{fiber}$ and Lemma~\ref{key} imply that the kernel of $D_\text{fiber}|_{\CO}$ is trivial for any orbit $\CO$ in $V$.  

First we give a definition of $\ind_{S^1}(M,V;L)$ for the special case where $M$ has a cylindrical end. In \cite{FFY,FFY2,FFY3} we showed the following proposition. 
\begin{prp}\label{Euclidian end}
Under the assumption in Theorem~\ref{main} suppose that $M$ has a cylindrical end $V= N\times (0,\infty)$ and all the data are translationally invariant on the end. Then for a sufficiently large $t\gg 0$, the space of $L^2$-solutions of $D_ts=0$ is finite dimensional and its super-dimension is independent of a sufficiently large $t\gg 0$ and any other continuous deformations of data. 
\end{prp}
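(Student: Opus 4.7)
The plan is to use a Witten-style deformation argument: the perturbation $t\rho D_\text{fiber}$ is engineered so that on the locus where $\rho \equiv 1$ all orbits are $L$-acyclic and hence $D_\text{fiber}^2$ is uniformly positive. This will force any $L^2$-solution of $D_t s = 0$ to localize near the compact set $M \setminus V$, after which finite-dimensionality and deformation invariance follow from the standard theory of Dirac-type operators on manifolds with cylindrical ends.

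The first step is the Bochner-type expansion
\[
D_t^2 = D^2 + t\{D,\rho D_\text{fiber}\} + t^2\rho^2 D_\text{fiber}^2,
\]
where $\{\cdot,\cdot\}$ denotes the anti-commutator. On the end, where $\rho\equiv 1$ and all data are translationally invariant, Lemma~\ref{key} (the $L$-acyclicity hypothesis) gives that $D_\text{fiber}|_\CO$ is invertible on every orbit $\CO\subset V$. Compactness of the cross-section $N$ together with translational invariance in the cylinder direction then upgrades this to a uniform positive spectral lower bound
\[
\langle D_\text{fiber}^2 s, s\rangle \geq \lambda^2\|s\|^2_{L^2(V)}
\]
for some $\lambda>0$. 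Using property (4), the cross term $\{D,\rho D_\text{fiber}\}$ decomposes into (i) a first-order piece supported in the compact collar $\operatorname{supp}(d\rho)$, coming from the Clifford action on $d\rho$, and (ii) an essentially bounded piece on the end, since the transverse Clifford action anti-commutes with $D_\text{fiber}$ and, by translational invariance, $D_\text{fiber}$ has constant coefficients along the cylinder direction. Absorbing (i) into an $\epsilon t^2 D_\text{fiber}^2$-term via Cauchy--Schwarz and taking $t$ large enough yields the key estimate
\[
\|D_t s\|^2 \;\geq\; \tfrac12 t^2\lambda^2\,\|s\|^2_{L^2(V')} - C\|s\|^2_{L^2(K)},
\]
where $K$ is a compact neighborhood of $M\setminus V$ and $V' = M\setminus K$.

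From this it follows that any $s\in\ker_{L^2}D_t$ is controlled by its restriction to $K$, which embeds $\ker_{L^2}D_t$ into $L^2(K)$ with an elliptic regularity estimate, and the Rellich--Kondrachov theorem produces finite-dimensionality. For invariance under $t$ and under continuous deformations of the data compatible with $L$-acyclicity on $V$: the estimate above is uniform in a small interval of the parameters, so $\{D_t\}$ forms a continuous family of formally self-adjoint Fredholm operators on an appropriate weighted $L^2$-space, and its super-dimension
\[
\ind_{S^1}(M,V;L) = \dim\ker D_t^0 - \dim\ker D_t^1
\]
is therefore locally constant in both $t$ and the data.

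The main technical obstacle is the careful bookkeeping of $t\{D,\rho D_\text{fiber}\}$: one must show that the sub-leading contributions produced by $d\rho$ in the collar and by the curvature of $W$ along orbits are at most $O(t)$ and thus dominated by the positive $t^2\lambda^2$ term for large $t$. A further subtlety, not visible from the statement but relevant for the applications in Section~3, is that orbits may carry nontrivial finite stabilizers, forcing one to work in the orbifold category; the spectral-gap argument is local along each orbit, so this causes no conceptual difficulty but does require the orbifold version of Lemma~\ref{key}.
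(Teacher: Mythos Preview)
The paper does not give an in-text proof of this proposition; it simply records that the result was established in \cite{FFY,FFY2,FFY3} and proceeds directly to Definition~\ref{ind(M,V,W) for cylindrical end}. What the paper does supply is the heuristic immediately preceding the proposition: the perturbation $D_t=D+t\rho D_{\text{fiber}}$ is an ``infinite dimensional analog of the Witten deformation,'' together with the four structural properties of $D_{\text{fiber}}$ and the observation (via Lemma~\ref{key}) that $\ker D_{\text{fiber}}|_{\CO}=0$ on every orbit in $V$.

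Your sketch is exactly this Witten-deformation argument made explicit, and it uses the listed ingredients in the intended way: the acyclicity hypothesis plus compactness of $N$ and translational invariance give the uniform gap $D_{\text{fiber}}^2\ge\lambda^2$ on the end; property~(4) controls the transverse part of the cross term $\{D,\rho D_{\text{fiber}}\}$; and the resulting $t^2$-versus-$t$ competition yields the localization estimate from which finite-dimensionality and deformation invariance follow by standard Fredholm theory on cylindrical-end manifolds. This matches the strategy carried out in the cited references, so there is no discrepancy to report---only the observation that the present paper outsources the details you have supplied.
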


\begin{dfn}\label{ind(M,V,W) for cylindrical end}
In the case of Proposition~\ref{Euclidian end} we define the $\ind(M,V,W)$ to be the super-dimension of the space of $L^2$-solutions of $D_ts=0$, namely, 
\[
\ind_{S^1}(M,V;L):=\dim \ker D_t^0\cap L^2 -\dim \ker D_t^1\cap L^2
\]
 for a sufficiently large $t\gg 0$. 
\end{dfn}

For the general end case, we replace $V$ by a cylindrical end so that all the data are translationally invariant on the end, and come down to the cylindrical end case. 
\begin{figure}[hbtp]
\begin{center}
\input{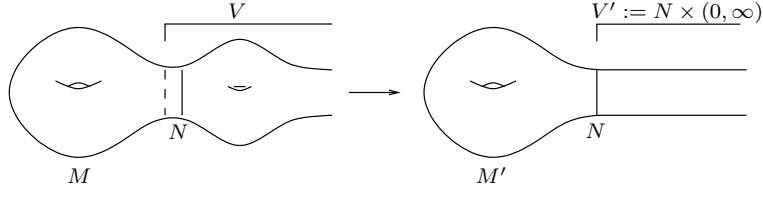}
\label{fig1}
\caption{Replacing $V$ by a cylindrical end}
\end{center}
\end{figure}
We can show that $\ind_{S^1}(M,V,W)$ is well-defined, namely, it does not depend on various choices of the construction. See \cite{FFY,FFY2,FFY3} for more details. 
\begin{rem}
To obtain a product formula we need to formulate and define $\ind_{S^1}(M,V;L)$ for a manifold whose end is the total space of 
a fiber bundle such that both of its base space and its fiber are manifolds with cylindrical end. 
\end{rem}

Let $(L,\nabla^L)\to (M,\omega)$ and $V$ be the data as in Theorem~\ref{main}. Suppose that there exist finitely many mutually disjoint $S^1$-invariant open sets $V_1$, $\ldots$, $V_n$ of $M$ such that $V_1$, $\ldots$, $V_n$, and $V$ form an open covering of $M$, namely, $M=V\cup \left(\cup_{i=1}^nV_i\right)$. Then, for each $i=1, \ldots , n$ the equivariant local index $\ind_{S^1}\left(V_i,V_i\cap V;L|_{V_i}\right)\in R(S^1)$ is defined, and as a corollary of Theorem~\ref{main} we have the following localization formula for $\ind_{S^1}\left(M,V;L\right)$. 
\begin{crl}\label{equivariant localization}
$\ind_{S^1}(M,V;L)$ is written as the sum of $\ind_{S^1}\left(V_i,V_i\cap V;L|_{V_i}\right)$'s, namely,
\begin{equation*}
\ind_{S^1}\left(M,V;L\right)=\bigoplus_{i=1}^n\ind_{S^1}\left(V_i,V_i\cap V;L|_{V_i}\right) . 
\end{equation*}
\end{crl}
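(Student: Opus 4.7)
The plan is to apply the excision property and the disjoint-union sum formula from Theorem~\ref{main} to the $S^1$-invariant open neighborhood $M':=\bigcup_{i=1}^nV_i$ of $M\setminus V$.

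First I would verify that each local index on the right-hand side is well defined, i.e.\ that $V_i\setminus(V_i\cap V)=V_i\setminus V$ is compact. Since $M=V\cup(\bigcup_iV_i)$, we have $M\setminus V\subset\bigcup_i V_i$, and because the $V_i$ are pairwise disjoint,
\[
M\setminus V=\coprod_{i=1}^n\bigl(V_i\cap(M\setminus V)\bigr)=\coprod_{i=1}^n(V_i\setminus V).
\]
Each $V_i\setminus V$ is open in $M\setminus V$ (since $V_i$ is open), and, being the complement within $M\setminus V$ of the union of the remaining open pieces, it is also closed in $M\setminus V$. As $M\setminus V$ is compact, each $V_i\setminus V$ is compact. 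Combined with the facts that $V_i\cap V$ is an $S^1$-invariant open subset of $V_i$ containing only $L$-acyclic orbits (inherited from $V$), the pair $(V_i,V_i\cap V)$ fulfills the hypotheses of Theorem~\ref{main}, so $\ind_{S^1}(V_i,V_i\cap V;L|_{V_i})$ is defined.

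Next I would apply the excision property (property~(4) of Theorem~\ref{main}) to $M'$, which is an $S^1$-invariant open neighborhood of $M\setminus V$, to obtain
\[
\ind_{S^1}(M,V;L)=\ind_{S^1}(M',V\cap M';L|_{M'}).
\]
Finally, since the $V_i$ are mutually disjoint, $M'=V_1\coprod\cdots\coprod V_n$ as $S^1$-manifolds, and iterating the sum formula for disjoint unions (property~(\ref{disjoint union})) yields
\[
\ind_{S^1}(M',V\cap M';L|_{M'})=\bigoplus_{i=1}^n\ind_{S^1}(V_i,V_i\cap V;L|_{V_i}).
\]
Combining the two equalities gives the asserted localization formula.

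The proof is really bookkeeping on top of Theorem~\ref{main}: the only point requiring any argument is the compactness of each $V_i\setminus V$, which is where the pairwise disjointness of the $V_i$ enters in an essential way. No analytic input beyond what is already packaged into the properties of $\ind_{S^1}(\,\cdot\,,\,\cdot\,;\,\cdot\,)$ is needed.
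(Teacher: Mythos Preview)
Your proof is correct and follows exactly the same two-step approach as the paper: apply the excision property of Theorem~\ref{main} to the $S^1$-invariant neighborhood $\bigcup_i V_i$ of $M\setminus V$, then use the disjoint-union sum formula. The only difference is that you add the (correct) verification that each $V_i\setminus V$ is compact, which the paper simply asserts before stating the corollary.
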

Corollary~\ref{equivariant localization} implies that $\ind_{S^1}\left(M,V;L\right)$ can be described in terms of the data restricted to the neighborhood $V_i$ of non $L$-acyclic orbits. 
\begin{proof}[Proof of Corollary~\ref{equivariant localization}]
Since $\cup_{i=1}^nV_i$ is an $S^1$-invariant open neighborhood of $M\setminus V$ the excision property shows
\begin{equation*}
\ind_{S^1}(M,V;L)=\ind_{S^1}\left(\cup_{i=1}^nV_i,\cup_{i=1}^n\cap V;L|_{\cup_{i=1}^nV_i}\right)  .
\end{equation*}
Moreover, since $V_i$'s are mutually disjoint, by the sum formula, we obtain the equality in Corollary~\ref{equivariant localization}.
\end{proof}

\begin{rem}
We can prove this theorem in the torus action case. In that case we need to construct an additional geometric structure named \lq\lq strongly acyclic compatible system" on $V$. See~\cite{FFY2,FFY3}. As an application of the theorem for the torus action we can obtain the Danilov formula for a nonsingular projective toric variety $M$~\cite{Dan}. 
It will be explained in \cite{FY5}. 
\end{rem}

\begin{exa}[Equivariant localization formula for $\C P^1$]
Let us consider the case of Example~\ref{non L-acyclic orbits in CP^1}. 
Recall that we have the exactly $k+1$ non $L$-acyclic orbits $\CO_0$, $\ldots$, $\CO_k$. 
For each $i=0,1,\ldots ,k$ we take a sufficiently small positive real number $\varepsilon_i>0$ and define $V_i$ by
\[
V_i:=\{ [z_0:z_1]\in M\colon i-\varepsilon_i<\abs{z_1}^2<i+\varepsilon_i \} . 
\]
We put 
\[
V:=\{[z_0:z_1]\in M\colon \abs{z_1}^2\not\in \Z\}.
\]
Then, for each $i=0,1,\ldots ,k$ the local index $\ind_{S^1}(V_i,V_i\cap V;L|_{V_i})$ is defined. Now we show the following formula
\begin{equation}\label{computation}
\ind_{S^1}(V_i,V_i\cap V;L|_{V_i})=\C_{i-m}. 
\end{equation}
For each $i=1,\ldots ,k-1$, $(L,\nabla^L)|_{V_i}\to (V_i,\omega|_{V_i})$ is equivariantly isomorphic to the trivial line bundle on the cylinder $S^1\times (i-\varepsilon_i ,i+\varepsilon_i)$
\[
\left(S^1\times (i-\varepsilon_i ,i+\varepsilon_i)\times \C ,d-2\pi\sqrt{-1}rd\theta \right)\to \left(S^1\times (i-\varepsilon_i ,i+\varepsilon_i),dr\wedge d\theta\right)
\]
with $S^1$-action
\begin{equation}\label{action1}
g(e^{2\pi\sqrt{-1}\theta},r,v):=(ge^{2\pi\sqrt{-1}\theta},r,g^mv)
\end{equation}
for $(e^{2\pi\sqrt{-1}\theta},r,v)\in S^1\times (i-\varepsilon_i ,i+\varepsilon_i)\times \C$. The isomorphism $f_i\colon S^1\times (i-\varepsilon_i ,i+\varepsilon_i)\times \C\to (L,\nabla^L)|_{V_i}$ is given as 
\[
f_i(e^{2\pi\sqrt{-1}\theta},r,v):=\left[\sqrt{k-r}:e^{2\pi\sqrt{-1}\theta}\sqrt{r},v\right]. 
\]
For each $i=0, k$, $(L,\nabla^L)|_{V_i}\to (V_i,\omega|_{V_i})$ is equivariantly isomorphic to the trivial line bundle on the disc $D_{\varepsilon_i}:=\{ w\in \C \colon \abs{w}^2<\varepsilon_i \}$
\[
\left(D_{\varepsilon_i}\times \C ,d+\dfrac{1}{2}(wd\bar w-\bar wdw) \right)\to \left(D_{\varepsilon_i},\dfrac{\sqrt{-1}}{2\pi}dw\wedge d\bar w\right)
\]
with $S^1$-action
\begin{equation}\label{action2}
g(w,v):=
\begin{cases}
(gw,g^mv) & \text{if }i=0\\
(g^{-1}w,g^{m-k}v) & \text{if }i=k
\end{cases}
\end{equation}
for $(w,v)\in D_{\varepsilon_i}\times \C$. The isomorphism $f_i\colon D_{\varepsilon_i}\times \C\to (L,\nabla^L)|_{V_i}$ is given as 
\[
f_i(w,v):=
\begin{cases}
[\sqrt{k-\abs{w}^2}:w,v] & \text{if }i=0\\
[w:\sqrt{k-\abs{w}^2},v] & \text{if }i=k. 
\end{cases}
\]
With the above identifications $f_i$, we can compute $\ind_{S^1}(V_i,V_i\cap V;L|_{V_i})$. According to \cite[Remark~6.10]{FFY} both of $\dim \ker D_t^0\cap L^2$ and $\dim\ker D_t^1\cap L^2$ in Definition~\ref{ind(M,V,W) for cylindrical end} for $\ind_{S^1}(V_i,V_i\cap V;L|_{V_i})$ are computed as
\[
\dim \ker D_t^0\cap L^2=1,\ \dim\ker D_t^1\cap L^2=0,
\] 
and a generator of $\ker D_t^0\cap L^2$ is given as 
\begin{equation}\label{generator1}
s_i(e^{2\pi\sqrt{-1}\theta},r)=\left(e^{2\pi\sqrt{-1}\theta},r, a_i(r)e^{2\pi\sqrt{-1}i\theta}\right)
\end{equation}
for $ i=1,\ldots ,k-1$ and 
\begin{equation}\label{generator2}
s_i(w)=\left(w,b_i(\abs{w})\right)
\end{equation}
for $i=0, k$, where $a_i(r)$ and $b_i(\abs{w})$ are some functions on $r$ and $\abs{w}$, respectively. See \cite[Remark~6.10]{FFY},or \cite[Section~5.3]{Y7} for more details. The $S^1$ acts on $\ker D_t^0\cap L^2$ by pull-back. For $g\in S^1$ we denote by $\varphi_g$ and $\psi_g$ the $S^1$-action on $M$ and $L$, respectively. By using the explicit expressions for the $S^1$-actions~\eqref{action1}, \eqref{action2} and the generators~\eqref{generator1}, \eqref{generator2} the $S^1$-actions on the generators are written as 
\[
(\psi_{g^{-1}}\circ s_i\circ \varphi_g)(e^{2\pi\sqrt{-1}\theta},r)=g^{i-m}s_i(e^{2\pi\sqrt{-1}\theta},r)
\]
for $i=1,\ldots ,k-1$, and 
\[
(\psi_{g^{-1}}\circ s_i\circ \varphi_g)(w)=
\begin{cases}
g^{-m}\left(s_i(w)\right) & \text{if }i=0\\
g^{k-m}\left(s_i(w)\right) & \text{if }i=k. 
\end{cases}
\]
Hence, we can obtain the formula~\eqref{computation}. 

By the second property for $\ind_{S^1}(M,V;L)$ in Theorem~\ref{main} and the formula~\eqref{computation} for $\ind_{S^1}(V_i,V_i\cap V;L|_{V_i})$, the equivariant localization formula in Corollary~\ref{equivariant localization} is written as
\[
\begin{split}
H^0(M;\CO_L)&=\ind_{S^1} D\\
&=\ind_{S^1}(M,V;L)\\
&=\bigoplus_{i=0}^k\ind_{S^1}(V_i,V_i\cap V;L|_{V_i})\\
& = \bigoplus_{i=0}^k \C_{i-m}.
\end{split}
\]
\end{exa}

\subsection{$\ind_{S^1}^{\gamma^*}(M,V;L)$}
Let $\t^*_\Z$ be the weight lattice of $S^1$. For each $\gamma^*\in \t^*_\Z$ and an element $U\in R(S^1)$ let us denote by $U^{\gamma^*}$ the multiplicity of the irreducible representation with weight $\gamma^*$ in $U$. By taking the multiplicities of the irreducible representations with weight $\gamma^*$ in the both sides of the equivariant localization formula in Corollary~\ref{equivariant localization}, we obtain the following localization formula for $\ind_{S^1}(M,V;L)^{\gamma^*}$.
\begin{equation}\label{localization formula for multiplicity}
\ind_{S^1}\left(M,V;L\right)^{\gamma^*}=\bigoplus_{i=1}^n\ind_{S^1}\left(V_i,V_i\cap V;L|_{V_i}\right)^{\gamma^*} . 
\end{equation}

In this subsection, for each $\gamma^*\in \t^*_\Z$, we define an $(L,\gamma^*)$-acyclic condition which is a milder condition than the $L$-acyclic condition. By using the $(L,\gamma^*)$-acyclic condition we obtain a version of a local index, which is denoted by $\ind_{S^1}^{\gamma^*}(M,O;L)$, and its localization formula. In particular, \eqref{localization formula for multiplicity} is obtained as a special case of the localization formula for $\ind_{S^1}^{\gamma^*}(M,O;L)$. 

Since the $S^1$-action preserves all the data, for each orbit $\CO$, $S^1$ acts on $H^0 (\CO; (L,\nabla^L )|_{\CO})$ by pull-back. 
\begin{dfn}
For each $\gamma^*\in \t^*_\Z$ an orbit $\CO$ is said to be $(L,\gamma^*)$-acyclic if $\CO$ does not consist of a fixed point and satisfies the condition $H^0 (\CO; (L,\nabla^L )|_{\CO})^{\gamma^*}=0$. 
\end{dfn}
\begin{rem}
By definition, any $L$-acyclic orbit are $(L,\gamma^*)$-acyclic orbit. 
\end{rem}

The following lemma is a version of Lemma~\ref{key} for $(L,\gamma^*)$-acyclic orbits. 
\begin{lmm}\label{key2}
Let $\CO$ be an orbit of the $S^1$-action on $M$. Then, the following conditions are equivalent:
\begin{enumerate}
\item $\CO$ is $(L,\gamma^*)$-acyclic. 
\item $H^\bullet \left(\CO;(L,\nabla^L)|_{\CO}\right)^{\gamma^*}=0$. 
\item The irreducible component with weight $\gamma^*$ of the kernel of the de Rham operator of $\CO$ with coefficients in $L$ vanishes. 
\end{enumerate}
\end{lmm}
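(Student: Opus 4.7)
The plan is to mirror the proof of Lemma~\ref{key}, keeping track of the $S^1$-weight decomposition throughout. Since condition (1) excludes the fixed-point case, $\CO$ is diffeomorphic to $S^1$ with an $S^1$-action by rotation (possibly through a finite cyclic quotient). The implication (2)$\Rightarrow$(1) is immediate because $H^0(\CO;(L,\nabla^L)|_{\CO})^{\gamma^*}$ is a direct summand of $H^\bullet(\CO;(L,\nabla^L)|_{\CO})^{\gamma^*}$. The equivalence (2)$\Leftrightarrow$(3) then follows from $S^1$-equivariant Hodge theory on $\CO$: the twisted de Rham operator commutes with the $S^1$-action, so its kernel decomposes into weight spaces, and the weight-$\gamma^*$ summand is canonically isomorphic to $H^\bullet(\CO;(L,\nabla^L)|_{\CO})^{\gamma^*}$, exactly as in the proof of Lemma~\ref{key}.

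The substantive step is (1)$\Rightarrow$(2). My strategy is to exhibit an $S^1$-equivariant isomorphism $H^0\cong H^1$ of the representations $H^i(\CO;(L,\nabla^L)|_{\CO})$. If $(L,\nabla^L)|_{\CO}$ admits no non-trivial global parallel section, then $H^\bullet$ vanishes entirely by Lemma~\ref{key} and (2) holds vacuously. Otherwise, I would fix an $S^1$-invariant volume form $d\theta$ on $\CO$ and consider the map $[s]\mapsto [s\,d\theta]$ from $H^0$ to $H^1$. For a global parallel section $s$, the $L$-valued $1$-form $s\,d\theta$ is automatically closed in the twisted de Rham complex (since $\dim\CO=1$) and is not exact, because $d\theta$ itself is not exact on $\CO$. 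Hence this map is an isomorphism. Since $d\theta$ is $S^1$-invariant, the isomorphism is $S^1$-equivariant and preserves weights, so the weight-$\gamma^*$ parts of $H^0$ and $H^1$ are isomorphic. Combined with $H^i=0$ for $i\ge 2$ (by dimension), vanishing of the weight-$\gamma^*$ part of $H^0$ yields vanishing of the same weight component of $H^\bullet$.

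The main obstacle I anticipate is justifying the weight-preserving property of this Poincar\'e-duality-type isomorphism: one must verify that the invariant volume form $d\theta$ carries the trivial $S^1$-weight, so that wedge product with it identifies the weight-$\gamma^*$ component of $H^0$ with that of $H^1$. This is essentially immediate because the $S^1$-action on $\CO\cong S^1$ is by rotations, under which the invariant volume form is fixed; everything else reduces to the one-dimensional Hodge theory already used in Lemma~\ref{key}.
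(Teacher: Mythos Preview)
Your proof is correct and follows the same outline the paper intends: the paper's entire proof reads ``The proof is similar to that of Lemma~\ref{key}.'' You have simply made explicit what the paper leaves implicit---the equivariant Hodge argument for (2)$\Leftrightarrow$(3), and for (1)$\Rightarrow$(2) the weight-preserving isomorphism $H^0\cong H^1$ given by wedging with the invariant volume form, which is precisely the one-dimensional circle-cohomology fact that the proof of Lemma~\ref{key} invokes via \cite[Lemma~2.29]{FFY2}.
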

The proof is similar to that of Lemma~\ref{key}. 

\begin{exa}[Non $(L,\gamma^*)$-acyclic orbits in $\C P^1$]\label{non (L gamma)-acyclic orbits in CP^1}
Let us find non $(L,\gamma^*)$-acyclic orbits for Example~\ref{non L-acyclic orbits in CP^1}. By definition, non $(L,\gamma ^*)$-acyclic orbits are orbits consisting of a fixed point, or orbits with $H^0 (\CO; (L,\nabla^L )|_{\CO})^{\gamma^*}\neq 0$. The orbits of the former type are 
\[
\CO_0=\{ [z_0:0] \},\ \CO_k=\{ [0:z_1]\}.
\]
We can show that there exists an orbit of the latter type if and only if $0\le m+\gamma^*\le k$, and in that case we have the unique orbit of the latter type which is 
\[
\CO_{m+\gamma^*}=\{[z_0:z_1]\in M\colon \abs{z_1}^2=m+\gamma^*\} .
\]
Recall that $\CO_i$s are the only orbits which satisfy $H^0 (\CO_i; (L,\nabla^L )|_{\CO_i})\neq 0$, and in that case an element $s\in H^0 (\CO_i; (L,\nabla^L )|_{\CO_i})$ has the form~\eqref{s}. $S^1$ acts on $H^0 (\CO_i; (L,\nabla^L )|_{\CO_i})$ by pull-back. For $g\in S^1$ and $s([z_0:hz_1])=[z_0:hz_1,h^{\abs{z_1}^2}s_0]\in H^0(\CO_i;(L,\nabla^L )|_{\CO_i})$ the $S^1$-action can be written as
\[
(\psi_{g^{-1}}\circ s\circ \varphi_g)([z_0:hz_1])=[z_0:hz_1,g^{\abs{z_1}^2-m}h^{\abs{z_1}^2}s_0]. 
\]
Thus, $(\psi_{g^{-1}}\circ s\circ \varphi_g)=g^{\gamma^*}s$ if and only if $\abs{z_1}^2-m=\gamma^*$.  
\end{exa}

Now we have a version of Theorem~\ref{main}. See~\cite{FFY3} in case of $\gamma^*=0$. 
\begin{thm}\label{main2}
Let $(M,\omega)$ be a possibly non-compact symplectic manifold with effective Hamiltonian $S^1$-action and $(L,\nabla^L)\to (M,\omega)$ an $S^1$-equivariant prequantum line bundle on it. Let $O\subset M$ be an $S^1$-invariant open set which contains only $(L,\gamma^*)$-acyclic orbits and whose complement $M\setminus O$ is compact. For these data, there exists an integer $\ind_{S^1}^{\gamma^*}(M,O;L)\in \Z$ that satisfies the same properties as in Theorem~\ref{main}. 
\end{thm}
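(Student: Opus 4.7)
The plan is to adapt the construction of $\ind_{S^1}(M,V;L)$ from Theorem~\ref{main} by working throughout on the $\gamma^*$-isotypic component of $L^2$-sections with respect to the $S^1$-action. Since $D$ and any $S^1$-invariant perturbation preserve the isotypic decomposition $\Gamma(W)=\bigoplus_{\gamma^*\in\t^*_\Z}\Gamma(W)^{\gamma^*}$, I would pick an $S^1$-invariant $D_{\text{fiber}}$ on $O$ satisfying the same four conditions as before and form the Witten-deformed operator $D_t:=D+t\rho D_{\text{fiber}}$, where $\rho$ is an $S^1$-invariant cut-off function vanishing off $O$ and equal to $1$ outside a compact neighborhood of $M\setminus O$.

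The crucial point is Lemma~\ref{key2}: although $O$ may contain orbits that are not $L$-acyclic, the $(L,\gamma^*)$-acyclic hypothesis guarantees that for every orbit $\CO\subset O$ the $\gamma^*$-isotypic component of $\ker\left(D_{\text{fiber}}|_\CO\right)$ vanishes, with a lower bound on the corresponding eigenvalues that is locally uniform in $\CO$. This is exactly the ingredient that in the $L$-acyclic case ensured finite-dimensionality of $\ker D_t\cap L^2$ for large $t$; here it will do the same job on each fixed isotypic piece.

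For the cylindrical end case $O=N\times (0,\infty)$ with translationally invariant data I would prove the analog of Proposition~\ref{Euclidian end} by restricting to the $\gamma^*$-isotypic component: separation of variables along the cylinder, combined with the spectral gap of $D_{\text{fiber}}^{\gamma^*}$ on each fiber, shows that for $t\gg 0$ the space $(\ker D_t\cap L^2)^{\gamma^*}$ is finite-dimensional and that its super-dimension is independent of $t$ and of continuous deformations of the data. One then sets
\[
\ind_{S^1}^{\gamma^*}(M,O;L):=\dim(\ker D_t^0\cap L^2)^{\gamma^*}-\dim(\ker D_t^1\cap L^2)^{\gamma^*}
\]
for sufficiently large $t$. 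The general case is reduced to the cylindrical end case by replacing a collar of $\partial O$ by an infinite cylinder equipped with translationally invariant data, exactly as in Definition~\ref{ind(M,V,W) for cylindrical end}. The six properties demanded by the theorem, namely deformation invariance, agreement with the $\gamma^*$-multiplicity of the equivariant Spin${}^c$ Dirac index when $M$ is closed, independence of the choice of $S^1$-invariant open subset of $O$, excision, the sum formula, and the product formula, all follow by taking $\gamma^*$-isotypic parts of the corresponding arguments in \cite{FFY2,FFY3}, since every object appearing in those proofs is $S^1$-equivariant.

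The main obstacle is the cylindrical end step: one must show that on the end the restriction of $D_t$ to the $\gamma^*$-isotypic part of $L^2$-sections is Fredholm, with index independent of large $t$ and of the allowed deformations. This requires a careful analysis of the model operator on $N\times (0,\infty)$ in which the $\gamma^*$-Fourier mode of the $D_{\text{fiber}}$ term must provide a strictly positive mass term on each orbit; this is precisely where the $(L,\gamma^*)$-acyclic hypothesis, via Lemma~\ref{key2}, enters. Once the spectral gap on the end is established, the remainder of the construction and the verification of the six properties are essentially formal adaptations of the arguments of \cite{FFY2,FFY3} carried out componentwise in $\gamma^*$.
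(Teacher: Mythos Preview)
Your proposal is correct and follows essentially the same approach as the paper. The paper does not give a detailed proof of Theorem~\ref{main2} but only indicates the construction in Remark~\ref{remark for multiplicity} (replacing the $L$-acyclic condition by the $(L,\gamma^*)$-acyclic condition and taking the $\gamma^*$-multiplicity of $\ker D_t^0\cap L^2-\ker D_t^1\cap L^2$), referring to \cite{FFY3} for the case $\gamma^*=0$; your outline fleshes out exactly this sketch, with Lemma~\ref{key2} playing the role of Lemma~\ref{key}.
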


\begin{rem}\label{remark for multiplicity}
In order to define $\ind_{S^1}^{\gamma^*}(M,O;L)\in \Z$ we replace the $L$-acyclic condition by the $(L,\gamma^*)$-acyclic condition in the construction of the equivariant local index, and consider the multiplicity of the irreducible representation with weight $\gamma^*$ in $\ker D_t^0\cap L^2 -\ker D_t^1\cap L^2$ for the perturbed Spin${}^c$ Dirac operator $D_t$ instead of $\ker D_t^0\cap L^2 -\ker D_t^1\cap L^2$ itself. In particular, since $L$-acyclic orbits are $(L,\gamma^*)$-acyclic, $V$ in Theorem~\ref{main} can be taken as $O$ in Theorem~\ref{main2}. In this case, by definition, $\ind_{S^1}^{\gamma^*}(M,O;L)$ is equal to $\ind_{S^1}(M,V;L)^{\gamma^*}$. 
\end{rem}

Let $(L,\nabla^L)\to (M,\omega)$ and $O$ be the data as in Theorem~\ref{main2}. Suppose that there exist finitely many mutually disjoint $S^1$-invariant open sets $O_1$, $\ldots$, $O_l$ of $M$ such that $O_1$, $\ldots$, $O_l$, and $O$ form an open covering of $M$, namely, $M=O\cup \left(\cup_{i=1}^lO_i\right)$. Then, for each $i=1, \ldots , l$ $\ind_{S^1}^{\gamma^*}\left(O_i,O_i\cap O;L|_{O_i}\right)\in \Z$ is well defined, and we have the following localization formula for $\ind_{S^1}^{\gamma^*}\left(M,O;L\right)$. 
\begin{crl}\label{localization for multiplicities}
$\ind_{S^1}^{\gamma^*}(M,O;L)$ is written as the sum of $\ind_{S^1}^{\gamma^*}\left(O_i,O_i\cap O;L|_{O_i}\right)$'s, namely,
\begin{equation*}
\ind_{S^1}^{\gamma^*}\left(M,O;L\right)=\bigoplus_{i=1}^l\ind_{S^1}^{\gamma^*}\left(O_i,O_i\cap O;L|_{O_i}\right) . 
\end{equation*}
\end{crl}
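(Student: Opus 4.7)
The plan is to mimic the proof of Corollary~\ref{equivariant localization} in the multiplicity setting. Since Theorem~\ref{main2} asserts that $\ind_{S^1}^{\gamma^*}(M,O;L)$ satisfies the same list of formal properties as $\ind_{S^1}(M,V;L)$ in Theorem~\ref{main}---in particular the excision property and the additivity over disjoint unions---the argument amounts to a two-step application of these properties.

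First I would check that for each $i=1,\ldots,l$ the symbol $\ind_{S^1}^{\gamma^*}(O_i, O_i\cap O; L|_{O_i})$ is actually defined: $O_i\cap O$ contains only $(L,\gamma^*)$-acyclic orbits (immediate, since $O$ does) and the complement $O_i\setminus(O_i\cap O)=O_i\cap (M\setminus O)$ is compact. This last point uses the mutual disjointness of the $O_i$'s: the sets $\{O_j\cap (M\setminus O)\}_{j=1}^l$ are pairwise disjoint relatively open subsets of the compact space $M\setminus O$ whose union is all of $M\setminus O$, so each is also closed in $M\setminus O$, hence compact.

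Next, because $\{O,O_1,\ldots,O_l\}$ covers $M$, the union $\bigcup_{i=1}^l O_i$ is an $S^1$-invariant open neighborhood of $M\setminus O$ in $M$. Applying the excision property from Theorem~\ref{main2} gives
\[
\ind_{S^1}^{\gamma^*}(M,O;L)=\ind_{S^1}^{\gamma^*}\!\left(\bigcup_{i=1}^l O_i,\;\bigcup_{i=1}^l O_i\cap O;\;L|_{\cup_i O_i}\right).
\]
Finally, by mutual disjointness $\bigcup_i O_i$ equals the topological disjoint union $O_1\sqcup\cdots\sqcup O_l$, and an iterated application of the sum formula collapses the right-hand side to $\bigoplus_{i=1}^l \ind_{S^1}^{\gamma^*}(O_i,O_i\cap O;L|_{O_i})$, which is the claim.

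There is essentially no obstacle to this proof once Theorem~\ref{main2} has been granted, since the hard analytic work---the definition of $\ind_{S^1}^{\gamma^*}$ via the perturbed Spin${}^c$ Dirac operator and the verification of the excision and sum formulas at the level of multiplicities, as outlined in Remark~\ref{remark for multiplicity}---has been absorbed into that theorem. The only nontrivial piece of bookkeeping is the compactness verification above, which rests entirely on the disjointness of the covering pieces.
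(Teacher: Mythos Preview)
Your proof is correct and follows exactly the approach the paper uses for the analogous Corollary~\ref{equivariant localization}: apply excision to pass to $\bigcup_i O_i$, then use the sum formula over the disjoint pieces, invoking Theorem~\ref{main2} for the $\gamma^*$-versions of these properties. The paper does not spell out a separate proof of Corollary~\ref{localization for multiplicities}, so your write-up (including the clean compactness check for $O_i\setminus O$) is, if anything, more detailed than what the paper provides.
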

This formula implies that $\ind_{S^1}^{\gamma^*}\left(M,O;L\right)$ can be described in terms of the data restricted to a sufficiently neighborhood of the fixed point set and orbits with $H^\bullet \left(\CO;(L,\nabla^L)|_{\CO}\right)^{\gamma^*}\neq 0$. 

\begin{rem}
By Remark~\ref{remark for multiplicity}, if we take $V$ and $V_i$'s in Corollary~\ref{equivariant localization} as $O$ and $O_i$'s in Corollary~\ref{localization for multiplicities}, respectively, then 
\eqref{localization formula for multiplicity} is obtained by Corollary~\ref{localization for multiplicities}. 
\end{rem}

\begin{exa}[Localization formula for multiplicities in $\C P^1$]
In Example~\ref{non (L gamma)-acyclic orbits in CP^1} we showed that for each $\gamma^*\in \t^*_\Z$ with $0< m+\gamma^*< k$ there are exactly three non $(L,\gamma^*)$-acyclic orbits $\CO_0$, $\CO_k$, and $\CO_{m+\gamma^*}$, otherwise we have exactly two non $(L,\gamma^*)$-acyclic orbits $\CO_0$ and $\CO_k$. We put 
\[
O_0:=V_0,\ O_k:=V_k,\ O_{m+\gamma^*}:=V_{m+\gamma^*},\ \text{and } O:=\{ [z_0:z_1]\in M\colon \abs{z_1}^2\neq 0, k, m+\gamma^*\} .
\]
%
Then, for each $i$ $\ind_{S^1}^{\gamma^*}(O_i,O_i\cap O;L|_{O_i})$ is defined. By definition, $(O_i,O_i\cap O)$ is equal to $(V_i,V_i\cap V)$. Hence, by Remark~\ref{remark for multiplicity} and the formula~\eqref{computation} we obtain 
\begin{align}
\ind_{S^1}^{\gamma^*}(O_i,O_i\cap O;L|_{O_i})&=\ind_{S^1}^{\gamma^*}(V_i,V_i\cap V;L|_{V_i})\nonumber\\
&=\ind_{S^1}(V_i,V_i\cap V;L|_{V_i})^{\gamma^*}\nonumber\\
&=
\begin{cases}
1 & \text{if }0\le m+\gamma^*\le k \text{ and } i={m+\gamma^*} \\ 
0 & \text{otherwise.}
\end{cases}\nonumber
\end{align}
\end{exa}

\section{A special case}
Let $(L,\nabla^L)\to (M,\omega)$ be as above. For $g\in S^1$ we denote by $\varphi_g$ and $\psi_g$ the $S^1$-action on $M$ and $L$, respectively. It is well known that corresponding to the infinitesimal lift of the $S^1$-action on $M$ to $L$, the moment map $\mu\colon M\to \t^*$ is determined uniquely by the following Kostant formula 
\begin{equation}\label{Kostant formula}
\dfrac{d}{dt}\Big|_{t=0}\psi_{e^{-t\xi}}\circ s \circ \varphi_{e^{t\xi}}=\nabla_{X_\xi}s+2\sqrt{-1}\pi\< \mu ,\xi\>s
\end{equation}
for $\xi\in \t$ and $s\in \Gamma (L)$, where $\t$ is the Lie algebra of $S^1$, $\< \ ,\ \>$ is the natural pairing between $\t^*$ and $\t$, and $X_\xi\in \Gamma (TM)$ is the infinitesimal action of $\xi$. We have the following relationship between non $L$-acyclic orbits, $(L,\gamma^*)$-acyclic orbits, and the values of $\mu$. 
\begin{lmm}\label{L-acyclic, (L,gamma)-acyclic orbits and moment map}
\textup{(1)} Non $L$-acyclic orbits are contained in $\mu^{-1}(\t^*_\Z)$. In particular, fixed points are contained in $\mu^{-1}(\t^*_\Z)$. \\
\textup{(2)} Orbits with $H^\bullet \left(\CO;(L,\nabla^L)|_{\CO}\right)^{\gamma^*}\neq 0$ are contained in $\mu^{-1}(\gamma^*)$.
\end{lmm}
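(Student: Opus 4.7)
My plan is to derive both parts uniformly from the Kostant formula~\eqref{Kostant formula}, exploiting two observations that hold on any orbit $\CO$: first, $X_\xi$ is tangent to $\CO$, so $\nabla_{X_\xi}s$ vanishes whenever $s$ is parallel along $\CO$; second, $\mu$ is constant on $\CO$ (for an abelian group action, equivariance of the moment map reduces to invariance), so $c_\xi:=\langle\mu|_\CO,\xi\rangle$ is a scalar. Moreover, since $\nabla^L$ is $S^1$-invariant, the translated family $s_t:=\psi_{e^{-t\xi}}\circ s\circ\varphi_{e^{t\xi}}$ remains parallel on $\CO$ for every $t$, so applying~\eqref{Kostant formula} to $s_{t_0}$ upgrades the infinitesimal identity to the transport equation
\[
\frac{d}{dt}s_t=2\pi\sqrt{-1}\,c_\xi\cdot s_t,\qquad\text{hence}\qquad s_t=e^{2\pi\sqrt{-1}\,c_\xi t}\,s\quad\text{on }\CO.
\]

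For part (1), I would invoke Lemma~\ref{key} to obtain a non-trivial global parallel section $s$ on the non $L$-acyclic orbit $\CO$. Choosing a generator $\xi_0\in\t$ of $\t_\Z=\ker(\exp\colon\t\to S^1)$, one has $\varphi_{e^{\xi_0}}=\id$ and $\psi_{e^{\xi_0}}=\id$, so $s_1=s$, which via the transport formula forces $e^{2\pi\sqrt{-1}\,c_{\xi_0}}=1$ and hence $c_{\xi_0}\in\Z$. Since $\t^*_\Z$ is the lattice dual to $\t_\Z$ and $\xi_0$ generates the latter, this gives $\mu|_\CO\in\t^*_\Z$. The ``in particular'' clause is then immediate, because fixed-point orbits are trivially non $L$-acyclic by the remark after Lemma~\ref{key}.

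For part (2), I would first use Lemma~\ref{key2} to reduce the hypothesis $H^\bullet(\CO;(L,\nabla^L)|_\CO)^{\gamma^*}\neq 0$ to the existence of a non-trivial parallel section $s$ on $\CO$ of $S^1$-weight $\gamma^*$, i.e.\ satisfying $\psi_{e^{-t\xi}}\circ s\circ\varphi_{e^{t\xi}}=e^{2\pi\sqrt{-1}\langle\gamma^*,\xi\rangle t}\,s$ for every $\xi\in\t$. Matching this identity with the transport formula above gives $\langle\mu|_\CO,\xi\rangle=\langle\gamma^*,\xi\rangle$ for all $\xi\in\t$, so $\mu|_\CO=\gamma^*$, i.e., $\CO\subset\mu^{-1}(\gamma^*)$.

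The main care needed is not in the calculation but in the two reductions it rests on: verifying that $s_t$ remains parallel on $\CO$ for every $t$ (so that Kostant can be reapplied at each $t_0$ and the ODE holds for all $t$), which is immediate from the $S^1$-invariance of $\nabla^L$; and reducing the higher-degree hypothesis in part (2) to a degree-zero parallel section of weight $\gamma^*$, which follows from the $S^1$-equivariant Hodge decomposition on the one-dimensional orbit $\CO$ used already in Lemma~\ref{key2}. Once these are in place, each assertion reduces to a one-line consequence of~\eqref{Kostant formula}.
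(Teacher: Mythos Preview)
Your proposal is correct and follows essentially the same route as the paper: both parts are deduced from the Kostant formula~\eqref{Kostant formula} by tracking the flow $s_t=\psi_{e^{-t\xi}}\circ s\circ\varphi_{e^{t\xi}}$ of a non-trivial parallel section, solving the resulting ODE, and reading off the integrality (part~(1)) or the exact value $\gamma^*$ (part~(2)) of $\mu|_{\CO}$. Your write-up is in fact more explicit than the paper's in justifying why $\nabla_{X_\xi}s=0$ on $\CO$, why $\mu$ is constant along $\CO$, and why $s_t$ stays parallel; the paper silently absorbs these points, and for part~(2) only evaluates the derivative at $t=0$ rather than integrating the ODE, but the substance is identical.
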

\begin{proof}
Let $\CO$ be a non $L$-acyclic orbit with $\mu(\CO)=\eta^*\in \t^*$. Then, by definition, there exists a non-trivial global parallel section $s\in H^0\left(\CO;(L,\nabla^L)|_{\CO}\right)$. For any element $\xi$ in the integral lattice $\t_\Z$ we put 
\[
s_t:=\psi_{e^{-t\xi}}\circ s \circ \varphi_{e^{t\xi}}. 
\]
By \eqref{Kostant formula}, we have
\[
\dfrac{d}{dt}s_t(x)=2\sqrt{-1}\pi\< \eta^*,\xi\>s_t(x) 
\]
for $x\in \CO$. Then, $s_t$ has the form
\[
s_t=e^{2\sqrt{-1}\pi\< \eta^*,\xi\>t}s.
\]
Since $\xi \in \t_\Z$, by putting $t=1$, 
\[
s=s_1=e^{2\sqrt{-1}\pi\< \eta^*,\xi\>}s. 
\]
Thus, $\<\eta^*,\xi\>$ should be integer for arbitrary $\xi \in \t_\Z$. This implies the first part. 

Let $\CO$ be an orbit with $H^\bullet \left(\CO;(L,\nabla^L)|_{\CO}\right)^{\gamma^*}\neq 0$. Then, there exists a non-trivial global parallel section $s\in H^0\left(\CO;(L,\nabla^L)|_{\CO}\right)^{\gamma^*}$. For any element $\xi\in \t$, by \eqref{Kostant formula}, we have
\[
\begin{split}
2\pi\sqrt{-1}\< \gamma^* ,\xi\>s(x)&=\dfrac{d}{dt}\Big|_{t=0}\psi_{e^{-t\xi}}\circ s \circ \varphi_{e^{t\xi}}(x)\\
&=2\pi\sqrt{-1}\< \mu(x),\xi\>s(x) 
\end{split}
\]
for $x\in \CO$. Since $s$ is non-trivial this implies the second part. 
\end{proof}

In the rest of this section we assume that $\mu$ is proper and the cardinality of $\mu(M)\cap \t^*_\Z$ is finite. For each $\gamma^*\in \mu(M)\cap \t^*_\Z$ let $V_{\gamma^*}$ be a sufficiently small $S^1$-invariant neighborhood of $\mu^{-1}(\gamma^*)$ so that $\{V_{\gamma^*}\}_{\gamma^*\in \mu(M)\cap \t^*_\Z}$ are mutually disjoint. Let $V$ be the complement of $\mu^{-1}(\t^*_\Z)$, namely, $V:=M\setminus \mu^{-1}(\t^*_\Z)$. By Lemma~\ref{L-acyclic, (L,gamma)-acyclic orbits and moment map} $V$ contains only $L$-acyclic orbits. Moreover, by assumption, $V_{\gamma^*}\setminus V$ is compact for each $\gamma^*\in \mu(M)\cap \t^*_\Z$. Hence, for each $\gamma^*\in \mu(M)\cap \t^*_\Z$ the equivariant local index $\ind_{S^1}\left(V_{\gamma^*},V_{\gamma^*}\cap V;L|_{V_{\gamma^*}}\right)\in R(S^1)$ is defined. By applying Corollary~\ref{equivariant localization} to this case we have the following localization formula for $\ind_{S^1}\left(M,V;L\right)$. 
\begin{equation}\label{equivariant localization formula 2}
\ind_{S^1}\left(M,V;L\right)=\bigoplus_{\gamma^*\in \mu(M)\cap \t^*_\Z}\ind_{S^1}\left(V_{\gamma^*},V_{\gamma^*}\cap V;L|_{V_{\gamma^*}}\right) . 
\end{equation}

We show the following theorem. 
\begin{thm}\label{vanishing formula for equivariant local index}
For each $\gamma^*\in \mu(M)\cap \t^*_\Z$ and $\sigma^*\in \t^*_\Z$ with $\gamma^*\neq \sigma^*$
\begin{equation*}
\ind_{S^1}\left(V_{\gamma^*},V_{\gamma^*}\cap V;L|_{V_{\gamma^*}}\right)^{\sigma^*}=0.
\end{equation*}
\end{thm}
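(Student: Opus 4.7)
The plan is to pass from the $R(S^1)$-valued local index to the integer multiplicity $\ind_{S^1}^{\sigma^*}$ via Remark~\ref{remark for multiplicity} and then use the $(L,\sigma^*)$-acyclic formalism to localize the problem to an arbitrarily small neighborhood of the fixed-point set in $V_{\gamma^*}$. Since $V_{\gamma^*}\cap V$ consists only of $L$-acyclic, and therefore $(L,\sigma^*)$-acyclic, orbits, Remark~\ref{remark for multiplicity} gives
\[
\ind_{S^1}\left(V_{\gamma^*},V_{\gamma^*}\cap V;L|_{V_{\gamma^*}}\right)^{\sigma^*}=\ind_{S^1}^{\sigma^*}\left(V_{\gamma^*},V_{\gamma^*}\cap V;L|_{V_{\gamma^*}}\right),
\]
so it is enough to show that the right hand side vanishes when $\sigma^*\neq\gamma^*$.

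First I would shrink $V_{\gamma^*}$ so that $\mu(V_{\gamma^*})\cap\t^*_{\Z}=\{\gamma^*\}$; this is possible by the finiteness of $\mu(M)\cap\t^*_{\Z}$ and the continuity of $\mu$. Then $V_{\gamma^*}\cap\mu^{-1}(\sigma^*)=\emptyset$, so Lemma~\ref{L-acyclic, (L,gamma)-acyclic orbits and moment map}(2) forces every non-fixed orbit in $V_{\gamma^*}$ to be $(L,\sigma^*)$-acyclic. The only non-$(L,\sigma^*)$-acyclic orbits in $V_{\gamma^*}$ therefore lie in the fixed-point set $F\subset\mu^{-1}(\gamma^*)\cap V_{\gamma^*}$, which is compact since $\mu$ is proper. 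Setting $O:=V_{\gamma^*}\setminus F$ gives an $S^1$-invariant open subset of $V_{\gamma^*}$ consisting only of $(L,\sigma^*)$-acyclic orbits, with compact complement $F$ and with $V_{\gamma^*}\cap V\subset O$. Property (3) of Theorem~\ref{main2} therefore yields
\[
\ind_{S^1}^{\sigma^*}\left(V_{\gamma^*},V_{\gamma^*}\cap V;L|_{V_{\gamma^*}}\right)=\ind_{S^1}^{\sigma^*}\left(V_{\gamma^*},O;L|_{V_{\gamma^*}}\right).
\]

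Next, excision (property (4)) together with the sum formula (property (5)) applied to small mutually disjoint $S^1$-invariant tubular neighborhoods $U_i$ of the connected components $F_i$ of $F$ reduces the problem to showing $\ind_{S^1}^{\sigma^*}(U_i,U_i\setminus F_i;L|_{U_i})=0$ for each $i$ whenever $\sigma^*\neq\gamma^*$. Identify $U_i$ $S^1$-equivariantly with a neighborhood of the zero section in the normal bundle $N_{F_i}$, which splits as $\bigoplus_\alpha N_\alpha$ into $S^1$-weight eigen-subbundles. The product formula (property (6) of Theorem~\ref{main}, proved as \cite[Theorem~5.8]{FFY2}) factors the local index at $U_i$ into a base contribution over $F_i$ and one factor per normal summand $N_\alpha$. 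Since $S^1$ fixes $F_i$ while acting with $\psi$-weight $-\gamma^*$ on the fibers of $L|_{F_i}$ by the Kostant formula~\eqref{Kostant formula}, every section of $L|_{F_i}$ carries $S^1$-weight $\gamma^*$, so the base contribution is a representation of pure weight $\gamma^*$. Each normal factor reduces to the one-dimensional disc-model computation of Example~\ref{non L-acyclic orbits in CP^1} with trivial $\psi$-weight on the fiber, which gives the trivial character $\C_0$. Multiplying these produces a pure-weight-$\gamma^*$ representation, so its $\sigma^*$-multiplicity vanishes for every $\sigma^*\neq\gamma^*$.

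The main obstacle is the last step: one must justify the product-formula decomposition near a possibly higher-dimensional fixed component $F_i$ and verify that the $L^2$-kernel on each normal disc factor truncates to the trivial character $\C_0$ rather than a larger Bargmann--Fock--type sum. Both rely on the cylindrical-end $L^2$-kernel analysis recalled in \cite[Remark~6.10]{FFY} and on checking that the fiber-bundle version of the product formula in \cite[Theorem~5.8]{FFY2} is compatible with extracting the $\sigma^*$-multiplicity.
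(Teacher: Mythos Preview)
Your reduction to small neighborhoods of the connected components of the fixed-point set is exactly what the paper does: convert to $\ind_{S^1}^{\sigma^*}$ via Remark~\ref{remark for multiplicity}, observe via Lemma~\ref{L-acyclic, (L,gamma)-acyclic orbits and moment map}(2) that $V_{\gamma^*}$ contains no orbits with $H^\bullet(\CO;L|_\CO)^{\sigma^*}\neq 0$ when $\sigma^*\neq\gamma^*$, enlarge $V_{\gamma^*}\cap V$ to $O=V_{\gamma^*}\setminus F$ using property~(3), and then localize to the $O_i$ via Corollary~\ref{localization for multiplicities}.

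The divergence is in the final vanishing step. The paper does \emph{not} invoke the product formula. Instead it uses the Kostant formula~\eqref{Kostant formula} to check that $S^1$ acts on $L_{x_0}$ with weight $-\gamma^*$ at every fixed point $x_0\in V_{\gamma^*}$, rewrites
\[
\ind_{S^1}^{\sigma^*}(O_i,O_i\cap O;L|_{O_i})=\ind_{S^1}^{0}(O_i,O_i\cap O;L\otimes\C_{\sigma^*}|_{O_i}),
\]
and then quotes \cite[Theorem~4.1]{FFY3} as a black box to conclude that the right-hand side vanishes whenever the $S^1$-weight on $(L\otimes\C_{\sigma^*})_{x_0}$ is nonzero, i.e.\ whenever $\sigma^*\neq\gamma^*$. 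So the heavy lifting is outsourced to a vanishing theorem proved in \cite{FFY3}, not to a product decomposition.

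Your product-formula route is headed in the same direction but, as you yourself flag, it is not yet a proof. Two concrete issues: first, \cite[Theorem~5.8]{FFY2} is formulated for fiber bundles whose base and fiber each have cylindrical ends, and the tubular neighborhood $U_i\to F_i$ over the closed base $F_i$ does not literally fit that hypothesis without adaptation. Second, your further splitting ``one factor per normal summand $N_\alpha$'' is an iterated application over a nontrivial base; even though each $N_\alpha$ is a global $S^1$-weight subbundle, it is not in general a product, so the claimed factorization of the local index into disc-model characters $\C_0$ does not follow from the one-dimensional computation of Example~\ref{non L-acyclic orbits in CP^1} alone. The paper's shifting trick plus \cite[Theorem~4.1]{FFY3} bypasses both difficulties; if you want to avoid that citation you would effectively be reproving that theorem here.
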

\begin{proof}
%
Since $L$-acyclic orbits are $(L,\sigma^*)$-acyclic $\ind_{S^1}^{\sigma^*}(V_{\gamma^*},V_{\gamma^*}\cap V;L|_{V_{\gamma^*}})$ is also defined, and by Remark~\ref{remark for multiplicity} it is equal to $\ind_{S^1}\left(V_{\gamma^*},V_{\gamma^*}\cap V;L|_{V_{\gamma^*}}\right)^{\sigma^*}$. 
$\ind_{S^1}^{\sigma^*}(V_{\gamma^*},V_{\gamma^*}\cap V;L|_{V_{\gamma^*}})$ is described in terms of the data restricted to a sufficiently neighborhood of the fixed point set and orbits with $H^\bullet \left(\CO;(L,\nabla^L)|_{\CO}\right)^{\sigma^*}\neq 0$. By Lemma~\ref{L-acyclic, (L,gamma)-acyclic orbits and moment map} and the definition of $V_{\gamma^*}$, if $\gamma^*\neq \sigma^*$, then $V_{\gamma^*}$ contains no orbits of the latter type. 

Suppose $V_{\gamma^*}$ contains fixed points. By Lemma~\ref{L-acyclic, (L,gamma)-acyclic orbits and moment map} the fixed point set $\left(V_{\gamma^*}\right)^{S^1}$ is contained in $\mu^{-1}(\gamma^*)$. In particular, $\left(V_{\gamma^*}\right)^{S^1}$ is compact since $\mu$ is proper. Suppose $\left(V_{\gamma^*}\right)^{S^1}$ has the exactly $l$ connected components $\left(V_{\gamma^*}\right)^{S^1}_1$, $\ldots$ , $\left(V_{\gamma^*}\right)^{S^1}_l$. For each $i=1, \ldots ,l$ we take a sufficiently small $S^1$-invariant neighborhood $O_i$ of $\left(V_{\gamma^*}\right)^{S^1}_i$, and also put $O:=V_{\gamma^*}\setminus \left(V_{\gamma^*}\right)^{S^1}$. Then, for each $i=1,\ldots ,l$ $\ind_{S^1}^{\sigma^*}(O_i,O_i\cap O;L|_{O_i})$ is defined, and by the third property in Theorem~\ref{main2} and Corollary~\ref{localization for multiplicities} for $\ind_{S^1}^{\sigma^*}\left(V_{\gamma^*},O;L|_{V_{\gamma^*}}\right)$  we have 
\[
\begin{split}
\ind_{S^1}^{\sigma^*}\left(V_{\gamma^*},V_{\gamma^*}\cap V;L|_{V_{\gamma^*}}\right)&=\ind_{S^1}^{\sigma^*}\left(V_{\gamma^*},O;L|_{V_{\gamma^*}}\right)\\
&=\bigoplus_{i=1}^l\ind_{S^1}^{\sigma^*}(O_i,O_i\cap O;L|_{O_i}). 
\end{split}
\]
Now, for each fixed point $x_0\in V_{\gamma^*}$, the fiber $L_{x_0}$ becomes a representation of $S^1$. By the Kostant formula~\eqref{Kostant formula}, for $\xi\in \t$ and $s\in \Gamma (L)$ we have  
\[
\begin{split}
\dfrac{d}{dt}\Big|_{t=0}\psi_{e^{-t\xi}}\left(s(x_0)\right)&=\dfrac{d}{dt}\Big|_{t=0}\psi_{e^{-t\xi}}\circ s \circ \varphi_{e^{t\xi}}(x_0)\\
&=\left(\nabla_{X_\xi}s\right)(x_0)+2\sqrt{-1}\pi\< \mu(x_0) ,\xi\>s(x_0)\\
&=2\sqrt{-1}\pi\< \gamma^*,\xi\>s(x_0). 
\end{split}
\]
This implies $\psi_g(v)=g^{-\gamma^*}v$ for $g\in S^1$ and $v\in L_{x_0}$. By definition it is easy to see that 
\[
\ind_{S^1}^{\sigma^*}(O_i,O_i\cap O;L|_{O_i})=\ind_{S^1}^0\left(O_i,O_i\cap O;L\otimes \C_{\sigma^*}|_{O_i}\right). 
\]
Thus, by \cite[Theorem~4.1]{FFY3}, for $\sigma^*\neq \gamma^*$ we obtain
\[
\ind_{S^1}^{\sigma^*}(O_i,O_i\cap O;L|_{O_i})=0
\]
for each $i=1, \ldots ,l$. This proves the theorem. 
%
\end{proof}

As a corollary we have the following formula.
\begin{crl}\label{multiplicity formula for equivariant local index}
\begin{equation*}
\ind_{S^1}(M,V;L)^{\gamma^*}=
\begin{cases}
\ind_{S^1}^{\gamma^*}(V_{\gamma^*},V_{\gamma^*}\cap V;L|_{V_{\gamma^*}}) & \text{if } \gamma^*\in \mu(M)\cap \t^*_{\Z}\\
0 & \text{otherwise}
\end{cases}.
\end{equation*}
\end{crl}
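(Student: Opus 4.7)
The plan is to deduce this corollary almost immediately by taking the $\gamma^*$-weight multiplicity of the localization formula \eqref{equivariant localization formula 2} and invoking the vanishing result in Theorem~\ref{vanishing formula for equivariant local index}, together with Remark~\ref{remark for multiplicity}.

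First I would recall that \eqref{equivariant localization formula 2} gives the decomposition
\[
\ind_{S^1}(M,V;L) \;=\; \bigoplus_{\eta^*\in \mu(M)\cap \t^*_\Z}\ind_{S^1}\!\left(V_{\eta^*},V_{\eta^*}\cap V;L|_{V_{\eta^*}}\right) \quad \text{in } R(S^1).
\]
Applying the functor $(-)^{\gamma^*}$, which extracts the multiplicity of the weight $\gamma^*$ irreducible, is additive, so the right-hand side becomes
\[
\sum_{\eta^*\in \mu(M)\cap \t^*_\Z}\ind_{S^1}\!\left(V_{\eta^*},V_{\eta^*}\cap V;L|_{V_{\eta^*}}\right)^{\gamma^*}.
\]

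Next I would split into cases. If $\gamma^*\notin \mu(M)\cap \t^*_\Z$, then every index $\eta^*$ in the sum satisfies $\eta^*\neq \gamma^*$, so Theorem~\ref{vanishing formula for equivariant local index} (with the roles of $\gamma^*$ and $\sigma^*$ in that theorem played by $\eta^*$ and $\gamma^*$ here) forces each summand to vanish, giving $\ind_{S^1}(M,V;L)^{\gamma^*}=0$. If instead $\gamma^*\in \mu(M)\cap \t^*_\Z$, the same theorem kills every term with $\eta^*\neq \gamma^*$, leaving only the $\eta^*=\gamma^*$ contribution:
\[
\ind_{S^1}(M,V;L)^{\gamma^*} \;=\; \ind_{S^1}\!\left(V_{\gamma^*},V_{\gamma^*}\cap V;L|_{V_{\gamma^*}}\right)^{\gamma^*}.
\]
Finally, since $V_{\gamma^*}\cap V\subset V$ contains only $L$-acyclic orbits, which are in particular $(L,\gamma^*)$-acyclic, Remark~\ref{remark for multiplicity} identifies this multiplicity with $\ind_{S^1}^{\gamma^*}(V_{\gamma^*},V_{\gamma^*}\cap V;L|_{V_{\gamma^*}})$, completing the case.

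Since all of the serious work has already been done in Theorem~\ref{vanishing formula for equivariant local index}, there is no real obstacle here; the only point that requires care is bookkeeping with the two different meanings of the weight label (the parameter distinguishing the pieces of the covering, versus the weight on which the multiplicity is taken). The argument is essentially a one-line consequence of \eqref{equivariant localization formula 2}, Theorem~\ref{vanishing formula for equivariant local index}, and Remark~\ref{remark for multiplicity}.
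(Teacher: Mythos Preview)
Your argument is correct and is exactly the intended route: the paper states this corollary immediately after Theorem~\ref{vanishing formula for equivariant local index} without proof, and your derivation from \eqref{equivariant localization formula 2}, Theorem~\ref{vanishing formula for equivariant local index}, and Remark~\ref{remark for multiplicity} is precisely what is implicit there.
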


So, we have a natural question: 
\begin{question}\label{question}
How to compute $\ind_{S^1}^{\gamma^*}\left(V_{\gamma^*},V_{\gamma^*}\cap V;L|_{V_{\gamma^*}}\right)$?
\end{question}

We give a partial answer of this question. First let us consider the case where $\gamma^*=0$. Suppose $0\in \t^*_{\Z}$ is a regular value of $\mu$. Then, a new symplectic orbifold $(M_0,\omega_0)$ with prequantum line bundle $(L_0,\nabla^{L_0})$ is obtained by the symplectic reduction, namely, 
\[
(M_0,\omega_0 ):=\left(\mu^{-1}(0),\omega|_{\mu^{-1}(0)}\right)/S^1,\ 
(L_0,\nabla^{L_0}):=\left((L,\nabla^L)|_{\mu^{-1}(0)}\right)/S^1.
\]
Since $\mu$ is proper $M_0$ is compact. Let $D_0$ be the Spin${}^c$ Dirac operator on $(M_0,\omega_0)$ with coefficients in $L_0$. Then, in \cite[Section~5.2]{FFY3} we showed the following formula. 
\begin{thm}[\cite{FFY3}]\label{[Q,R]=0}
Let $\gamma^*=0\in \t^*_{\Z}$ be a regular value of $\mu$. Then, $\ind_{S^1}^0\left(V_0,V_0\cap V;L|_{V_0}\right)$ is equal to the index of $D_0$.
\end{thm}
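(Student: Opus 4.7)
The plan is to use the excision property together with an equivariant local normal form near $\mu^{-1}(0)$ and a separation-of-variables argument to identify $S^1$-invariant $L^2$-solutions of the perturbed Dirac operator with harmonic sections on the reduction. Since $0$ is a regular value of the proper moment map $\mu$, the level set $\mu^{-1}(0)$ is a compact $S^1$-invariant submanifold on which $S^1$ acts locally freely. Invoking the excision property of Theorem~\ref{main2}, I would first shrink $V_0$ to an arbitrarily small $S^1$-invariant tubular neighborhood of $\mu^{-1}(0)$. By the equivariant coisotropic embedding theorem and the local normal form for Hamiltonian $S^1$-actions near a regular level, this neighborhood can be identified $S^1$-equivariantly and symplectomorphically with a neighborhood of the zero section in the model $\mu^{-1}(0)\times(-\varepsilon,\varepsilon)$, in which $S^1$ acts trivially on $(-\varepsilon,\varepsilon)$, $\mu$ is the projection onto $(-\varepsilon,\varepsilon)$, and, after choosing a connection on the principal $S^1$-orbibundle $\mu^{-1}(0)\to M_0$, the symplectic form and prequantum line bundle admit explicit descriptions. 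Because the orbits in $\mu^{-1}(0)$ carry $S^1$-invariant global parallel sections by Lemma~\ref{L-acyclic, (L,gamma)-acyclic orbits and moment map}(2) with $\gamma^*=0$, the bundle $(L,\nabla^L)|_{V_0}$ is equivalent, in the $S^1$-invariant sense, to the pull-back of $(L_0,\nabla^{L_0})$ from $M_0$.

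Next I would choose a compatible almost complex structure $J$ on $V_0$ that preserves the horizontal distribution of the principal bundle and pairs the orbit direction $\partial_\theta$ with the transverse direction $\partial_t$ as a complex line normal to $M_0$. With this choice, $W|_{V_0}$ splits, at the level of $S^1$-invariant sections on $M_0\times(-\varepsilon,\varepsilon)$, as the external tensor product of the pull-back of $\wedge^{0,\bullet}T^*M_0\otimes L_0$ from $M_0$ with the spinor bundle on the normal complex line. Under this splitting, the $S^1$-invariant part of the perturbed Dirac operator $D_t=D+t\rho D_{\text{fiber}}$ takes the form
\[
D_t^{S^1} \;=\; D_0\otimes 1 \;+\; 1\otimes D_{\text{normal},t},
\]
where $D_0$ is the Spin${}^c$ Dirac operator on $(M_0,\omega_0)$ with coefficients in $L_0$ and $D_{\text{normal},t}$ is a Dirac-type operator in the transverse direction. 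The fiber perturbation $\rho D_{\text{fiber}}$, which on each orbit in $V_0\cap V$ is the twisted de Rham operator with non-trivial holonomy, combines with the radial direction to produce, as $t\to\infty$, a harmonic-oscillator-type operator whose ground state is an exponentially localized Gaussian concentrated on $\mu^{-1}(0)$.

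A standard spectral analysis in the transverse direction then shows that for $t\gg 0$ the operator $D_{\text{normal},t}$ has a unique $L^2$-ground state with eigenvalue $0$ and a uniform spectral gap above. Separation of variables yields an isomorphism of super vector spaces
\[
\ker (D_t^{S^1})\cap L^2 \;\cong\; \ker D_0,
\]
from which
\[
\ind_{S^1}^0(V_0,V_0\cap V;L|_{V_0}) \;=\; \dim\ker D_0^0 - \dim\ker D_0^1 \;=\; \ind D_0
\]
follows, completing the proof. I expect the main obstacle to be the construction of the equivariant normal form together with a choice of $J$ and $D_{\text{fiber}}$ for which the splitting $D_t^{S^1}=D_0\otimes 1 + 1\otimes D_{\text{normal},t}$ holds on the nose, rather than merely up to lower-order terms; if only the latter is available, one would conclude by invoking the deformation invariance of $\ind_{S^1}^0$ in Theorem~\ref{main2}, together with a uniform spectral-gap estimate to control the lower-order errors as $t\to\infty$.
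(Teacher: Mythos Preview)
The paper does not actually prove this theorem here; it is quoted from \cite[Section~5.2]{FFY3} with no argument given in the present note, so there is no in-paper proof to compare against directly.

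Your outline is nonetheless a plausible reconstruction and is close in spirit to what the surrounding text suggests the argument should be. The ingredients you invoke---excision to a tubular neighborhood of $\mu^{-1}(0)$, the equivariant normal form identifying $V_0$ with $\mu^{-1}(0)\times(-\varepsilon,\varepsilon)$, and a separation-of-variables reduction to $D_0$ on $M_0$---are the natural ones. The paper hints at the mechanism: property~(6) in Theorem~\ref{main} is a product formula, and the Remark following Definition~\ref{ind(M,V,W) for cylindrical end} says this formula is set up precisely for manifolds whose end is a fiber bundle with base and fiber both carrying cylindrical ends, which is exactly the structure of the normal neighborhood fibering over $M_0$. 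Your transverse model (an $S^1$-orbit times an interval, with the orbit non $L$-acyclic only at the center) is the cylinder computed in the $\C P^1$ example, where the invariant local index is one; the product formula then gives $\ind D_0$.

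The one place your sketch is optimistic is the exact splitting $D_t^{S^1}=D_0\otimes 1 + 1\otimes D_{\text{normal},t}$. Since $\mu^{-1}(0)\to M_0$ is in general a nontrivial principal $S^1$-orbibundle, the horizontal lift of $D_0$ and the vertical operator do not commute on the nose; curvature terms of the chosen connection appear. You correctly anticipate this and propose to absorb the discrepancy via deformation invariance plus a spectral gap. That is exactly the analytic content packaged into the product formula of \cite{FFY2}, so invoking property~(6) directly would be the cleaner route and avoids redoing those estimates by hand.
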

\begin{rem}
Note that if $0$ is a regular value of $\mu$, $V_0$ contains no fixed points. In fact, by Lemma~\ref{L-acyclic, (L,gamma)-acyclic orbits and moment map} and the definition of $V_0$, if fixed points exist in $V_0$, they should be contained in $\mu^{-1}(0)$. But, by assumption, $0$ is a regular value of $\mu$. Thus $V_0$ contains no fixed point. In particular, $\ind_{S^1}^0(V_0,V_0\cap V;L|_{V_0})$ is described in terms of the data restricted to a sufficiently neighborhood of the orbits with $H^\bullet \left(\CO;(L,\nabla^L)|_{\CO}\right)^0\neq 0$. 
%
\end{rem}

Next let us consider the general case. For a general regular value $\gamma^*\in \t^*_\Z$ of $\mu$ we use the shifting trick. By tensoring $\C_{\gamma^*}$ with $(L,\nabla^L)$ we obtain the prequantum line bundle $(L,\nabla^L)\otimes \C_{\gamma^*}$ on $(M,\omega)$ with shifted $S^1$-action. Then, the moment map associated to the shifted $S^1$-action, which we denote by $\mu_{\gamma^*}$, is written as
\begin{equation}\label{mu_gamma}
\mu_{\gamma^*}=\mu-\gamma^*. 
\end{equation}
Since $\gamma^*$ is a regular value of $\mu$ $0$ is a regular value of $\mu_{\gamma^*}$. Hence, by the symplectic reduction for the shifted $S^1$-action, a new compact symplectic orbifold $(M_{\gamma^*},\omega_{\gamma^*})$ with prequantum line bundle $(L_{\gamma^*},\nabla^{L_{\gamma^*}})$ is obtained as
\[
(M_{\gamma^*},\omega_{\gamma^*} ):=\left(\mu^{-1}_{\gamma^*}(0),\omega|_{\mu^{-1}_{\gamma^*}(0)}\right)/S^1,\ 
(L_{\gamma^*},\nabla^{L_{\gamma^*}}):=\left((L,\nabla^L)\otimes \C_{\gamma^*}|_{\mu^{-1}_{{\gamma^*}}(0)}\right)/S^1.
\]
Let $D_{\gamma^*}$ be the Spin${}^c$ Dirac operator on $(M_{\gamma^*},\omega_{\gamma^*})$ with coefficients in $L_{\gamma^*}$. Then, as a corollary of Theorem~\ref{[Q,R]=0} we obtain the following formula. 
\begin{crl}\label{multiplicity formula for equivariant local index 2}
For a regular value $\gamma^*\in \t^*_{\Z}$ of $\mu$ $\ind_{S^1}^{\gamma^*}\left(V_{\gamma^*},V_{\gamma^*}\cap V;L|_{V_{\gamma^*}}\right)$ is equal to the index of $D_{\gamma^*}$. 
\end{crl}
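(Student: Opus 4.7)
The plan is to deduce the general case from Theorem~\ref{[Q,R]=0} via the shifting trick. Throughout, let $(L',\nabla^{L'}) := (L \otimes \C_{\gamma^*}, \nabla^L \otimes d)$, viewed as an $S^1$-equivariant prequantum line bundle on $(M,\omega)$ with the shifted $S^1$-action. By \eqref{mu_gamma}, its moment map is $\mu_{\gamma^*} = \mu - \gamma^*$, and since $\gamma^*$ is regular for $\mu$, $0$ is regular for $\mu_{\gamma^*}$. Moreover, the symplectic reduction of $(L',\mu_{\gamma^*})$ at $0$ coincides by construction with $(M_{\gamma^*},\omega_{\gamma^*},L_{\gamma^*})$.

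The first step is to translate the weight-$\gamma^*$ acyclicity condition for $L$ into the weight-$0$ acyclicity condition for $L'$. For any orbit $\CO$, tensoring by $\C_{\gamma^*}$ shifts $S^1$-weights by $\gamma^*$, so
\[
H^0(\CO;(L',\nabla^{L'})|_{\CO})^0 \cong H^0(\CO;(L,\nabla^L)|_{\CO})^{\gamma^*}.
\]
Consequently, an orbit is $(L,\gamma^*)$-acyclic if and only if it is $(L',0)$-acyclic. In particular, the open set $V_{\gamma^*}\cap V$ consists of $(L',0)$-acyclic orbits (they are $L$-acyclic, hence $(L,\sigma^*)$-acyclic for every $\sigma^*$). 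From the construction of $\ind_{S^1}^{\gamma^*}$ recalled in Remark~\ref{remark for multiplicity} — where one simply extracts the multiplicity of the weight-$\gamma^*$ piece of $\ker D_t^0\cap L^2 - \ker D_t^1\cap L^2$ for the perturbed Spin${}^c$ Dirac operator — the same shift of weights gives the identification
\[
\ind_{S^1}^{\gamma^*}\bigl(V_{\gamma^*},V_{\gamma^*}\cap V;L|_{V_{\gamma^*}}\bigr) = \ind_{S^1}^{0}\bigl(V_{\gamma^*},V_{\gamma^*}\cap V;L'|_{V_{\gamma^*}}\bigr).
\]

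The second step is to apply Theorem~\ref{[Q,R]=0} to the shifted data $((M,\omega),L',\mu_{\gamma^*})$. Because $V_{\gamma^*}$ is, by definition, a sufficiently small $S^1$-invariant neighborhood of $\mu^{-1}(\gamma^*) = \mu_{\gamma^*}^{-1}(0)$, it plays the role of the open set called $V_0$ in Theorem~\ref{[Q,R]=0} (the other $V_{\eta^*}$'s for $\eta^* \neq \gamma^*$ are irrelevant thanks to Theorem~\ref{vanishing formula for equivariant local index} applied to the shifted data, or equivalently are pushed outside by excision). Theorem~\ref{[Q,R]=0} then asserts
\[
\ind_{S^1}^{0}\bigl(V_{\gamma^*},V_{\gamma^*}\cap V;L'|_{V_{\gamma^*}}\bigr) = \ind D_{\gamma^*},
\]
where $D_{\gamma^*}$ is the Spin${}^c$ Dirac operator on the reduction of $(L',\mu_{\gamma^*})$ at $0$, which we have already identified with $(M_{\gamma^*},\omega_{\gamma^*},L_{\gamma^*})$. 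Combining the two displays finishes the proof.

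The only real obstacle is the bookkeeping in the first step: one must check that the shifting by $\C_{\gamma^*}$ commutes with the analytic construction of the local index in Theorem~\ref{main2}, i.e., that the perturbed operator $D_t$ for $L'$ and weight $0$ has the same weight-decomposed kernel (up to the uniform shift by $\gamma^*$) as the one for $L$ and weight $\gamma^*$. This is essentially tautological — tensoring with the trivial line bundle $\C_{\gamma^*}$ does not alter the underlying differential operator, only the $S^1$-representation on sections — but it is the one place where one must look carefully at the definitions rather than cite an earlier formal property.
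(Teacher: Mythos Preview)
Your proof is correct and follows essentially the same route as the paper: apply Theorem~\ref{[Q,R]=0} to the shifted prequantum data $L\otimes\C_{\gamma^*}$, and then use the tautological weight-shift identity $\ind_{S^1}^{\gamma^*}(\,\cdot\,;L)=\ind_{S^1}^{0}(\,\cdot\,;L\otimes\C_{\gamma^*})$. Your parenthetical about the other $V_{\eta^*}$'s and Theorem~\ref{vanishing formula for equivariant local index} is unnecessary, since Theorem~\ref{[Q,R]=0} is already stated locally on $V_0$, but it does no harm.
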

\begin{proof}
By~\eqref{mu_gamma} $V_{\gamma^*}$ is a sufficiently small $S^1$-invariant neighborhood of $\mu_{\gamma^*}^{-1}(0)$. Thus, by Theorem~\ref{[Q,R]=0} for the prequantum line bundle $(L,\nabla^L)\otimes \C_{\gamma^*}$ on $(M,\omega)$ with shifted $S^1$-action we obtain
\begin{equation*}
\ind_{S^1}^0\left(V_{\gamma^*},V_{\gamma^*}\cap V;L\otimes \C_{\gamma^*}|_{V_{\gamma^*}}\right)=\ind D_{\gamma^*}. 
\end{equation*}
On the other hand,  it is easy to see that 
\[
\ind_{S^1}^{\gamma^*}\left(V_{\gamma^*},V_{\gamma^*}\cap V;L|_{V_{\gamma^*}}\right)=\ind_{S^1}^0\left(V_{\gamma^*},V_{\gamma^*}\cap V;L\otimes \C_{\gamma^*}|_{V_{\gamma^*}}\right). 
\]
This proves the corollary. 
\end{proof}

In particular, for a closed $M$, we obtain the the quantization conjecture for the $S^1$-action.
\begin{crl}[\cite{GuS5, Gu2,Duistermaat-Guillemin-Meinrenken-Wu,Mein1,Vergne1,Vergne2,Mein2,Tian-Zhang}, etc.]
Let $(L,\nabla^L)\to (M,\omega)$ be as above. Assume $M$ is closed. If $\gamma^*\in \t^*_\Z$ is a regular value of $\mu$, then, 
\[
\left( \ind_{S^1} D\right)^{\gamma^*}=\ind D_{\gamma^*}. 
\]
\end{crl}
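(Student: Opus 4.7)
The plan is to chain together the results already stated in this section, since all the heavy lifting has been done. First, I would verify that the closed hypothesis on $M$ puts us inside the framework of the special case: since $M$ is compact, the moment map $\mu$ is automatically proper, and $\mu(M)$ is a compact subset of $\t^*$, so $\mu(M)\cap\t^*_\Z$ is finite. Thus the whole apparatus leading up to Corollary~\ref{multiplicity formula for equivariant local index} and Corollary~\ref{multiplicity formula for equivariant local index 2} applies, and in particular one may choose, for each $\gamma^*\in\mu(M)\cap\t^*_\Z$, a sufficiently small $S^1$-invariant neighborhood $V_{\gamma^*}$ of $\mu^{-1}(\gamma^*)$, mutually disjoint, together with $V:=M\setminus\mu^{-1}(\t^*_\Z)$, which contains only $L$-acyclic orbits by Lemma~\ref{L-acyclic, (L,gamma)-acyclic orbits and moment map}.

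Next, I would identify $\ind_{S^1}D$ with the equivariant local index. Since $M$ is closed, the complement $M\setminus V=\mu^{-1}(\t^*_\Z)$ is automatically compact, so $\ind_{S^1}(M,V;L)$ is well-defined, and by property~(2) of Theorem~\ref{main} it coincides with the equivariant index of the Spin${}^c$ Dirac operator:
\[
\ind_{S^1}D=\ind_{S^1}(M,V;L)\in R(S^1).
\]
Taking the multiplicity of the weight $\gamma^*$ representation on both sides reduces the problem to computing $\ind_{S^1}(M,V;L)^{\gamma^*}$.

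Now I would split into two cases according to whether $\gamma^*\in\mu(M)\cap\t^*_\Z$. If $\gamma^*$ lies in this finite set, Corollary~\ref{multiplicity formula for equivariant local index} gives
\[
\ind_{S^1}(M,V;L)^{\gamma^*}=\ind_{S^1}^{\gamma^*}\!\left(V_{\gamma^*},V_{\gamma^*}\cap V;L|_{V_{\gamma^*}}\right),
\]
and since $\gamma^*$ is assumed to be a regular value of $\mu$, Corollary~\ref{multiplicity formula for equivariant local index 2} identifies the right-hand side with $\ind D_{\gamma^*}$. If $\gamma^*\notin\mu(M)$, then Corollary~\ref{multiplicity formula for equivariant local index} gives $\ind_{S^1}(M,V;L)^{\gamma^*}=0$; on the other hand $\mu_{\gamma^*}^{-1}(0)=\mu^{-1}(\gamma^*)=\emptyset$, so the reduced space $M_{\gamma^*}$ is empty and $\ind D_{\gamma^*}=0$ as well. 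Combining the two cases yields the desired equality.

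There is no real obstacle here; the statement is essentially an assembly of Theorem~\ref{main}(2), Corollary~\ref{multiplicity formula for equivariant local index}, and Corollary~\ref{multiplicity formula for equivariant local index 2}. The only point that warrants a line of care is the case $\gamma^*\notin\mu(M)$, where one must observe that the empty reduced space produces a zero index so that the formula still holds. All the substantive work, in particular the vanishing Theorem~\ref{vanishing formula for equivariant local index} and the identification with the reduced Spin${}^c$ index via the shifting trick, has already been packaged into the cited corollaries.
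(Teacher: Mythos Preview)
Your proposal is correct and follows the same route as the paper: invoke property~(2) of Theorem~\ref{main} to identify $\ind_{S^1}D$ with $\ind_{S^1}(M,V;L)$, then apply Corollaries~\ref{multiplicity formula for equivariant local index} and~\ref{multiplicity formula for equivariant local index 2}. The paper's proof is a two-line sketch of exactly this; you have simply spelled out the verification that the standing hypotheses ($\mu$ proper, $\mu(M)\cap\t^*_\Z$ finite) hold when $M$ is closed, and handled the vacuous case $\gamma^*\notin\mu(M)$ explicitly.
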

\begin{proof}
From the second property in Theorem~\ref{main} $\ind_{S^1}(M,V;L)$ is equal to the equivariant index $\ind_{S^1} D$ for the Spin${}^c$ Dirac operator $D$ on $M$ with coefficients in $L$. Then, this is a consequence of Corollaries~\ref{multiplicity formula for equivariant local index} and~\ref{multiplicity formula for equivariant local index 2}.
\end{proof}

If $\gamma^*$ is a critical value of $\mu$, the reduced space $(M_{\gamma^*},\omega_{\gamma^*})$ has more complicated singularities than orbifold singularities in general. Even in this case Meinrenken and Sjamaar defined the Riemann-Roch index for $(M_{\gamma^*},\omega_{\gamma^*})$, and showed that the quantization conjecture still holds for a closed $M$~\cite{MeinSj}. 

We conclude this note with the following question:
\begin{question}
For a critical value $\gamma^*$, is $\ind_{S^1}^{\gamma^*}\left(V_{\gamma^*},V_{\gamma^*}\cap V;L|_{V_{\gamma^*}}\right)$ equal to Meinrenken and Sjamaar's Riemann-Roch index for $(M_{\gamma^*},\omega_{\gamma^*})$ ? 
\end{question}

\bibliographystyle{amsplain}
\bibliography{references}

\newcommand{\noop}[1]{}
\providecommand{\bysame}{\leavevmode\hbox to3em{\hrulefill}\thinspace}
\providecommand{\MR}{\relax\ifhmode\unskip\space\fi MR }
\providecommand{\MRhref}[2]{%
  \href{http://www.ams.org/mathscinet-getitem?mr=#1}{#2}
}
\providecommand{\href}[2]{#2}
\begin{thebibliography}{10}

\bibitem{Andersen}
J.~E. Andersen, \emph{Geometric quantization of symplectic manifolds with
  respect to reducible non-negative polarizations}, Comm. Math. Phys.
  \textbf{183} (1997), no.~2, 401--421.

\bibitem{Dan}
V.~Danilov, \emph{The geometry of toric varieties {(Russian)}}, Uspekhi Mat.
  Nauk \textbf{33} (1978), no.~2, 85--134, English translation: Russian Math.
  Surveys 33 (1978), no. 2, 97--154.

\bibitem{Duistermaat-Guillemin-Meinrenken-Wu}
H.~Duistermaat, V.~Guillemin, E.~Meinrenken, and S.~Wu, \emph{Symplectic
  reduction and {R}iemann-{R}och for circle actions}, Math. Res. Lett.
  \textbf{2} (1995), no.~3, 259--266.

\bibitem{FFY4}
H.~Fujita, M.~Furuta, and T.~Yoshida, \emph{Geodesic flows on the sphere and
  {Riemann-Roch} numbers}, in preparation.

\bibitem{FFY}
\bysame, \emph{Torus fibrations and localization of index {I}}, J. Math. Sci.
  Univ. Tokyo \textbf{17} ({\noop{2008}}2010), no.~1, 1--26.

\bibitem{FFY2}
\bysame, \emph{Torus fibrations and localization of index {II}}, {UTMS Preprint
  Series 2009-21}, 64 pages. Also available at arXiv:0910.0358,
  {\noop{2009}}2009.

\bibitem{FFY3}
\bysame, \emph{Torus fibrations and localization of index {III}}, {UTMS
  Preprint Series 2010-11}, 25 pages. Also available at arXiv:1008.5007,
  {\noop{2010}}2010.

\bibitem{FY5}
H.~Fujita and T.~Yoshida, in preparation.

\bibitem{GK}
M.~D. Grossberg and Y.~Karshon, \emph{Equivariant index and the moment map for
  completely intergrable torus actions}, Adv. Math. \textbf{133} (1998), no.~2,
  185--223.

\bibitem{Gu2}
V.~Guillemin, \emph{Reduced phase spaces and {R}iemann-{R}och}, Lie theory and
  geometry, Progr. Math., vol. 123, Birkh\"auser Boston, Boston, MA, 1994,
  pp.~305--334.

\bibitem{GuS5}
V.~Guillemin and S.~Sternberg, \emph{Geometric quantization and multiplicities
  of group representations}, Invent. Math. \textbf{67} (1982), no.~3, 515--538.

\bibitem{GuS4}
\bysame, \emph{The {G}elfand-{C}etlin system and quantization of the complex
  flag manifolds}, J. Funct. Anal. \textbf{52} (1983), no.~1, 106--128.

\bibitem{JW1}
L.~Jeffrey and J.~Weitsman, \emph{{B}ohr-{S}ommerfeld orbits in the moduli
  space of flat connections and the {V}erlinde dimension formula}, Comm. Math.
  Phys. \textbf{150} (1992), no.~3, 593--630.

\bibitem{Kamiyama}
Y.~Kamiyama, \emph{The cohomology of spatial polygon spaces with anticanonical
  sheaf}, Int. J. Appl. Math. \textbf{3} (2000), no.~3, 339--343.

\bibitem{KT}
Y.~Karshon and S.~Tolman, \emph{The moment map and line bundles over
  presymplectic toric manifolds}, J. Differential Geom. \textbf{38} (1993),
  no.~3, 465--484.

\bibitem{LM}
H.~B. Lawson and M.~L. Michelsohn, \emph{Spin geometry}, Princeton Mathematical
  Series, vol.~38, Princeton University Press, Princeton, NJ, 1989.

\bibitem{Masuda}
M.~Masuda, \emph{Unitary toric manifolds, multi-fans and equivariant index},
  Tohoku Math. J. (2) \textbf{51} (1999), no.~2, 237--265.

\bibitem{McDuffSalamon}
D.~McDuff and D.~Salamon, \emph{Introduction to symplectic topology}, second
  ed., Oxford Mathematical Monographs, The Clarendon Press Oxford University
  Press, New York, 1998.

\bibitem{Mein1}
E.~Meinrenken, \emph{On {R}iemann-{R}och formulas for multiplicities}, J. Amer.
  Math. Soc. \textbf{9} (1996), no.~2, 373--389.

\bibitem{Mein2}
\bysame, \emph{Symplectic surgery and the {${\rm Spin}\sp c$}-{D}irac
  operator}, Adv. Math. \textbf{134} (1998), no.~2, 240--277.

\bibitem{MeinSj}
E.~Meinrenken and R.~Sjamaar, \emph{Singular reduction and quantization},
  Topology \textbf{38} (1999), 699--762.

\bibitem{Tian-Zhang}
Y.~Tian and W.~Zhang, \emph{An analytic proof of the geometric quantization
  conjecture of {G}uillemin-{S}ternberg}, Invent. Math. \textbf{132} (1998),
  no.~2, 229--259.

\bibitem{Vergne1}
M.~Vergne, \emph{Multiplicities formula for geometric quantization. {I}}, Duke
  Math. J. \textbf{82} (1996), no.~1, 143--179.

\bibitem{Vergne2}
\bysame, \emph{Multiplicities formula for geometric quantization. {II}}, Duke
  Math. J. \textbf{82} (1996), no.~1, 181--194.

\bibitem{Y7}
T.~Yoshida, \emph{${RR}=\#{BS}$ via localization of index}, Trends in Math.
  \textbf{12} (2010), no.~1, 1--41.

\end{thebibliography}
\end{document}